\documentclass[11pt,a4paper,fleqn]{article}

\usepackage[margin=1in]{geometry}
\usepackage{graphicx}
\usepackage{float}
\usepackage{amssymb}
\usepackage{amsmath}
\usepackage{amsthm}
\usepackage{xcolor}
\usepackage{tikz}
\usepackage[ruled]{algorithm2e}
\usepackage{dsfont}
\usepackage[mathscr]{euscript}
\usepackage{longtable}
\usepackage{tabularx}
\usepackage{paralist}
\usepackage{booktabs}
\usepackage{pdflscape}
\usepackage{caption}
\usepackage{authblk}
\usepackage[authoryear]{natbib}
\usepackage[colorlinks,citecolor=blue,urlcolor=blue,linkcolor=blue]{hyperref}

\usetikzlibrary{fit}
\usetikzlibrary{shapes.geometric}
\usetikzlibrary{positioning}
\usetikzlibrary{calc}
\usetikzlibrary{arrows.meta}
\usetikzlibrary{backgrounds}

\newcommand{\ourparagraph}[1]{
  \medskip

  \noindent\emph{#1}.
}

\newcommand{\cycle}{C}
\newcommand{\fa}{\text{for all }}

\newcommand{\cev}[1]{\reflectbox{\ensuremath{\vec{\,\,\reflectbox{\ensuremath{#1}}}}}\!\!}
\newcommand{\lbl}{\ell}
\newcommand{\flbl}{\vec{\ell}}
\newcommand{\blbl}{\cev{\ell}}
\newtheorem{theorem}{Theorem}
\newtheorem{lemma}{Lemma}
\newtheorem{example}{Example}
\let\cite\citep

\title{A Numerically-safe Branch-Price-and-Cut Algorithm for the Length-Constrained Cycle Partition Problem}

\author[1]{Mohammed Ghannam\thanks{ghannam@zib.de}}
\author[1,2]{Ambros Gleixner\thanks{gleixner@htw-berlin.de}}
\author[3]{Edward Lam\thanks{edward.lam@monash.edu}}
\author[1]{Gioni Mexi\thanks{mexi@zib.de}}
\affil[1]{Zuse Institute Berlin, Germany}
\affil[2]{HTW Berlin, Germany}
\affil[3]{Monash University, Australia}

\date{}

\begin{document}
\renewcommand{\floatpagefraction}{.8}
\renewcommand{\textfraction}{.1}
\renewcommand{\topfraction}{.9}
\renewcommand{\bottomfraction}{.9}

\maketitle

\begin{abstract}
    The length-constrained cycle partition problem (LCCP) is
    a graph optimization problem in which a set of nodes must be partitioned into a minimum number of cycles.
    Every node is associated with a critical time and the length of every cycle must not exceed the critical time of any node in the cycle.
    We formulate LCCP as a set partitioning model and solve it using an exact branch-price-and-cut approach.
    Our dynamic programming-based pricing algorithm to generate improving cycles
    exploits the particular structure of the pricing problem for efficient bidirectional search and symmetry breaking.
    Computational results show that the LP relaxation of the set partitioning model produces very strong dual bounds and our branch-price-and-cut method improves significantly over the state of the art.
    It is able to solve previously solved instances in a fraction of the time and closes $14$~previously unsolved instances with numerically safe bounds,
    one of which has~$76$ nodes, a notable improvement over the previous limit of~$52$ nodes.
\end{abstract}

\noindent\textbf{Keywords:} Branch-Price-and-Cut; Numerical Safety; Cycle Partitioning; Dynamic Programming; Bidirectional Search

\section{Introduction}\label{sec:intro}
A \emph{cycle partition} of an undirected graph~$G=(V,E)$ is a partition of the set of nodes~$V$ into disjoint cycles. In the \emph{length-constrained cycle partition problem} (LCCP) introduced in~\cite{hoppmann2020minimum},
we are additionally given a \emph{critical time}~$q_i > 0$ for every node~$i \in V$ and a \emph{travel time}~$t_{i,j} \geq 0$ for every edge~$\{i,j\} \in E$.
We call a cycle $\cycle=(i_0,i_1,\ldots,i_K=i_0)$ \emph{length-feasible} if it satisfies the
length constraint
\begin{equation}\label{eq:lc}
t(\cycle) := t_{i_0,i_1} + t_{i_1,i_2} + \ldots + t_{i_{K-1},i_K}
\leq q(\cycle) := \min \{ q_{i_0},q_{i_1},\ldots,q_{i_K} \},
\end{equation}
i.e., the total time to traverse the edges of the cycle must not exceed the critical time
of any node in the cycle.
In other words, an agent that continuously travels along the cycle will visit each node~$i$ of the cycle with frequency at least~$1/q_i$.
The LCCP then amounts to computing the smallest number of disjoint, length-feasible cycles
that cover all nodes.

\begin{center}
    \begin{tikzpicture}[
        node distance=2cm,
        every node/.style={draw, circle, minimum size=0.9cm, font=\small},
        edge label/.style={draw=none, rectangle, minimum size=0, font=\footnotesize, fill=white, inner sep=1pt},
        crit label/.style={draw=none, rectangle, font=\footnotesize}
    ]
        \node (A) at (0,0) {$a$};
        \node[crit label, above=-7pt of A] {5};

        \node (B) at (2,1) {$b$};
        \node[crit label, above=-7pt of B] {4};

        \node (C) at (2,-1) {$c$};
        \node[crit label, below=-7pt of C] {4};

        \node (D) at (4,1) {$d$};
        \node[crit label, above=-7pt of D] {6};

        \node (E) at (4,-1) {$e$};
        \node[crit label, below=-7pt of E] {6};

        \draw (A) -- node[edge label, above left] {1} (B);
        \draw (A) -- node[edge label, below left] {1} (C);
        \draw (B) -- node[edge label, left] {1} (C);
        \draw (B) -- node[edge label, above] {2} (D);
        \draw (C) -- node[edge label, below] {2} (E);
        \draw (D) -- node[edge label, right] {1} (E);
    \end{tikzpicture}
    \captionof{figure}{Example LCCP instance with 5 nodes. Edge labels indicate travel times $t_{ij}$; node labels show critical times $q_i$.}
    \label{fig:example-instance}
\end{center}

\begin{example}[LCCP instance]\label{ex:lccp-instance}
Consider the instance in \autoref{fig:example-instance}.
Nodes $b$ and $c$ have the tightest critical times ($q_b = q_c = 4$), limiting which cycles can include them.
The only Hamiltonian cycle $C = (a, b, d, e, c, a)$ visiting all nodes has $t(C) = 1 + 2 + 1 + 2 + 1 = 7$ and $q(C) = \min\{5, 4, 6, 6, 4\} = 4$.
Since $t(C) > q(C)$, this cycle violates the length constraint, hence at least two cycles are needed.
An optimal partition uses:
\begin{itemize}
    \item $C_1 = (a, b, c, a)$: $t(C_1) = 1 + 1 + 1 = 3 \leq 4 = q(C_1)$
    \item $C_2 = (d, e, d)$: $t(C_2) = 1 + 1 = 2 \leq 6 = q(C_2)$
\end{itemize}
\end{example}

The setting of LCCP is found to be useful, e.g., in applications where the objective is to find the smallest number of agents to conduct periodic surveillance or maintenance actions~\cite{hoppmann2022mathematical}.
Similar types of length constraints can be found in the literature on minimizing cycle length in kidney exchange programmes~\cite{Lam:2019ab}, where the length is simply the number of nodes in the cycle.
\citet{hoppmann2020minimum} show that the LCCP is NP-hard by reduction from the \emph{traveling salesman problem} (TSP), and that no polynomial-time approximation algorithm exists.
They present a compact mixed-integer programming (MIP) formulation based on the Miller, Tucker, and Zemlin (MTZ)~\cite{miller1960integer} subtour-elimination constraints for the TSP. This compact model is able to solve instances with up to 29~nodes to proven optimality.

In a later work~\citep{lccp}, the same authors introduced a MIP formulation based on exponentially many subtour elimination constraints (SEC), separated as cutting planes, together with valid inequalities derived from cliques in conflict hypergraphs, and an efficient primal heuristic.
A branch-and-cut implementation of the SEC model based on a state-of-the-art commercial
MIP solver was able to solve instances with up to 52 nodes.
 Both the MTZ and SEC models require additional auxiliary variables to count the number of used cycles in the objective function and to ensure that the total time of a cycle respects the critical time constraint of its nodes.
Despite the mentioned improvements, the linear relaxations of these compact formulations remain weak.
The formulations are moreover highly symmetric, which can hamper branching, although modern MIP solvers include dedicated symmetry-handling techniques that mitigate this to some extent.

The goal of our work is to overcome computational bottlenecks of the previous approaches by considering a set partitioning formulation based on cycle variables.
Let~$\Omega$ denote the set of all length-feasible cycles, and let~$a_i^\cycle \in \{0,1\} $ be a
constant equal to $1$ iff node~$i$ is contained in cycle~$\cycle$.
Introducing binary variables $\lambda_\cycle$ to indicate whether cycle $\cycle \in \Omega$ is used in the solution, we arrive at the set partitioning (SP) formulation
\begin{equation}\label{prob:imp}
\begin{array}{llll}
\min\;\; & \sum_{\cycle \in \Omega} \lambda_\cycle & \\\\
\text{s.t.} & \sum_{\cycle \in \Omega} a_i^\cycle  \lambda_\cycle &= 1  & \quad \fa i \in V, \\\\
              & \mbox{}\hfill\lambda_{\cycle} &\in \{0,1\} & \quad \fa \cycle \in \Omega.
       \end{array}
        \tag{SP}
\end{equation}
This reformulation of LCCP is inspired by formulations for the
cardinality-constrained multi-cycle problem in kidney exchange~\cite{Lam:2019ab}, where a maximum-value packing of cycles is required, and by exact methods in many logistics applications such as vehicle routing, see, e.g.,~\cite{costa_exact_2019}.

As in these applications, it is not viable to solve~\eqref{prob:imp} explicitly due to the
exponential number of variables.
Instead, we apply \emph{column generation} in order to solve the linear programming (LP)
relaxation of~\eqref{prob:imp} dynamically, and integrate this into an
exact \emph{branch-and-price} algorithm in order to solve LCCP instances to proven
optimality.
We refer to~\cite{cg-book,Wolsey2020,uchoa2024columngen,desrosiers2026branchprice} for a general overview of this methodology.

A major computational task in this approach is the repeated solution of the
so-called pricing problem for dynamically adding
variables with negative reduced cost to the LP relaxation of~\eqref{prob:imp}.
Given travel times $t_{i,j} \geq 0$ for $\{i,j\}\in E$ and both critical times $q_i > 0$ and
weights $\pi_i\in\mathbb{R}$ for~$i\in V$, the pricing problem amounts to computing
a cycle $C$ that satisfies the length constraint~\eqref{eq:lc} and
maximizes the sum of node weights.
We call this the \emph{length-constrained prize-collecting cycle
problem} (LCPCCP).
Note that problem data may be non-metric, i.e., travel times $t_{i,j}$ may violate the triangle inequality, and that the node weights~$\pi_i$, which are
derived as dual multipliers associated with the partitioning constraint of the node in the
LP relaxation, are not restricted in sign.
Even in the metric case and when critical times are assumed to be constant, LCPCCP is strongly NP-hard by reduction from the Hamiltonian cycle problem (\autoref{thm:nphard}).
The paper is organized as follows.

In \autoref{sec:labeling} we present a new label-setting dynamic-programming algorithm
for LCPCCP including dominance rules for reducing the search space and an efficient bidirectional search that allows us to enumerate cycles
only half-way.
In \autoref{sec:trieq}, we describe how to exploit the triangle inequality for locally metric input data, including per-node detection, pruning of zero-dual nodes, and preemptive feasibility checks.
In \autoref{sec:cuts}, we adapt two families of cutting planes from the literature, subset row cuts and clique cuts, to the set partitioning formulation~\eqref{prob:imp} of LCCP and discuss their management during column generation.
In \autoref{sec:numerical} we describe how we obtain numerically safe lower bounds without floating-point inaccuracies,
and in \autoref{sec:bnp} we integrate the resulting column generation scheme into a numerically safe
branch-price-and-cut algorithm enhanced by symmetry breaking, heuristic pricing, parallel
pricing, and early branching.
In \autoref{sec:results} we describe the results of our computational study to analyze the
performance of the new approach on benchmark instances from the literature and to quantify
the impact of the individual improvement techniques.

Our branch-price-and-cut algorithm is able to solve 52~instances with up to 76~nodes to proven optimality, closing $14$~previously unsolved instances.
Already its branch-and-price core is on average 14.7~times faster than the best previous approach (see \autoref{tab:overall}), while in addition ensuring numerically safe bounds.
Among the algorithmic contributions, the ingredients that exploit the specific structure of LCCP are, to the best of our knowledge, new: the symmetry of the undirected pricing problem, which lets bidirectional search extend labels in only one direction; symmetry breaking by critical time across the per-node pricing problems; and the exploitation of the triangle inequality for locally metric data.

A preliminary version of this work, introducing the set partitioning formulation and the branch-and-price algorithm, was presented at the INFORMS Optimization Society (IOS) Conference 2024~\cite{ios}. The present manuscript extends it with cutting-plane separation (subset-row and clique cuts) and a framework for numerically-safe dual bounds, together with a considerably expanded computational study.

\section{Solving the LP Relaxation by Column Generation}\label{sec:labeling}
\newcommand{\redcost}{\overline{c}}
\newcommand{\lblnodes}{\mathscr{N}}
\newcommand{\lbldef}{(\lblnodes, v, \redcost, t, q)}

Column generation is an advanced technique for solving large linear programs and is an essential subroutine in branch-and-price algorithms~\cite{cg-book,Wolsey2020}.
The method has recently been found~\citep{uchoa2026kantorovich} to date back to \citet{KantorovichZalgaller1951}, and was independently developed by \citet{GilmoreGomory1961}.

In order to solve the LP relaxation of \eqref{prob:imp} by column
generation, we first note that the upper bounds $\lambda_\cycle\leq 1$ are implied by the
partitioning constraints and can be removed. This avoids introducing a separate dual variable for each bound, which would otherwise enter the reduced cost of every column.
Without the upper bounds, the reduced cost depends only on the partitioning duals~$\pi_i$, which are unrestricted in sign.
We call the resulting LP the \emph{master problem} (MP).
Second, we replace $\Omega$ with a subset of columns $\Omega' \subset \Omega$ to form
the \emph{restricted master problem} (RMP).
Column generation then repeatedly optimizes (RMP), each time producing a primal-dual
pair of solutions~$(\lambda, \pi)$ and solving the so-called \emph{pricing problem} to
check whether cycles~$\cycle$ with negative reduced cost
\begin{equation}
  \label{prob:pp}
  \textstyle\redcost(\cycle) = 1 - \sum_{i \in V} a^\cycle_i \pi_i = 1 - \sum_{i \in C} \pi_i
\end{equation}
exist in $\Omega \setminus \Omega'$.
If yes, a subset of these are added to $\Omega'$ and (RMP) is
reoptimized; if not, then $(\lambda, \pi)$ is optimal for (MP).
We refer to~\cite{cg-book,Wolsey2020} for details on column generation.
As explained above, the pricing problem of minimizing~\eqref{prob:pp} amounts to LCPCCP, the length-constrained prize-collecting cycle problem.

\begin{theorem}\label{thm:nphard}
    LCPCCP is strongly NP-hard, even when the travel times are metric and the critical times are constant.
\end{theorem}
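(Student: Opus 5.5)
We reduce from the Hamiltonian cycle problem (HC) on undirected graphs, which is strongly NP-complete since it involves no numbers at all. Given an HC instance $H=(V,E_H)$ with $n=|V|\ge 3$, we build an LCPCCP instance on the complete graph $K_n$ over $V$. Travel times are $t_{i,j}=1$ for $\{i,j\}\in E_H$ and $t_{i,j}=2$ otherwise. Every critical time is the constant $q_i=n$, and every node weight is $\pi_i=1$. All numbers are bounded by $n$, so the reduction is polynomial and of polynomially bounded magnitude, which is what strong NP-hardness requires.

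The travel times take values in $\{1,2\}$, so they satisfy the triangle inequality: $t_{i,k}\le 2\le t_{i,j}+t_{j,k}$. Hence the instance is metric and its critical times are constant, as the statement requires.

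It remains to prove that $H$ has a Hamiltonian cycle if and only if the LCPCCP optimum equals $n$, i.e., a length-feasible cycle collecting all nodes exists.
\begin{itemize}
\item Suppose $H$ has a Hamiltonian cycle. Traversing it in $K_n$ uses $n$ edges of weight $1$, so its length is $n\le q(\cycle)=n$, and it collects prize $n$.
\item Conversely, suppose a length-feasible cycle visits all $n$ nodes. It uses exactly $n$ edges, each of length at least $1$, so its total length is at least $n$. Since it is bounded by $n$, every edge must have length exactly $1$, i.e., lie in $E_H$. The cycle is therefore a Hamiltonian cycle of $H$.
\end{itemize}
Since all weights are positive, the maximum prize is $n$ exactly when such a cycle exists. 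So deciding whether the LCPCCP optimum is at least $n$ decides HC.

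The main point needing care is the notion of ``cycle'' the paper allows. Cycles of length two, $(i,j,i)$, appear in \autoref{ex:lccp-instance}. These do not matter here, because for $n\ge 3$ a cycle collecting all nodes must have $n$ distinct nodes and $n$ edges. We also assume the pricing graph may be complete, or equivalently that we may choose $G$ freely; if the underlying graph had to be $H$ itself, we would simply take $G=H$ with unit times.

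The metric argument above is otherwise routine. Because edge lengths lie in $\{1,2\}$, the triangle inequality holds automatically, with no shortest-path closure needed.
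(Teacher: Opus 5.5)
Your proof is correct, but the hardness comes from a different source than in the paper.

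The paper builds the instance on $G_H$ itself, with unit travel times, $q_i=n$ and $\pi_i=1$. Every cycle then has $t(C)=|C|\le n=q(C)$, so the length constraint never binds. The difficulty lies entirely in the prize-collecting objective, which reaches $n$ only on a Hamiltonian cycle of the sparse graph.

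You instead pass to the complete graph $K_n$ with travel times in $\{1,2\}$. The budget $q_i=n$ then excludes every non-edge of $H$ from a spanning cycle, in the spirit of the classical $\{1,2\}$-TSP reduction.

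The paper's route is shorter, and it shows that hardness survives even when the length constraint is vacuous. Its metric claim, however, relies on the triangle inequality only for triangles present in $E$, as defined at the start of \autoref{sec:trieq}. Under the stricter per-node test of that section, a missing bypass edge has travel time $\infty$, so a non-complete $G_H$ would generally fail.

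Your construction satisfies the triangle inequality for all triples of a complete graph. It therefore gives hardness for complete metric instances as well. The cost is the extra counting argument that a spanning cycle of length at most $n$ uses only unit edges. Your fallback remark ($G=H$ with unit times) is precisely the paper's proof.
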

\begin{proof}
    We reduce from the Hamiltonian cycle problem, which is strongly NP-complete~\cite{gareyjohnson1979}.
    Given a graph $G_H=(V_H, E_H)$ with $n=|V_H|$, construct an LCPCCP instance on $G_H$ with unit travel times $t_{i,j}=1$ for all $\{i,j\}\in E_H$, constant critical times $q_i=n$ for all $i\in V_H$, and node weights $\pi_i=1$ for all $i\in V_H$.
    Every cycle $C$ then satisfies $t(C)=|C|\leq n=q(C)$ and is therefore length-feasible, and its objective value $\sum_{i\in C}\pi_i=|C|$ equals its number of nodes.
    Hence an optimal cycle contains $n$ nodes if and only if $G_H$ admits a Hamiltonian cycle.
    The construction uses only unit data, and unit travel times satisfy the triangle inequality, so LCPCCP is strongly NP-hard already in the metric, constant-critical-time case.
\end{proof}

\subsection{Exact Pricing by Dynamic Programming}

In the following, suppose we restrict LCPCCP by fixing one node $s\in V$ to be
contained in the cycle.
Then the resulting problem closely resembles a
\emph{resource-constrained shortest path problem} (RCSPP)~\cite{irnich_resource_2008}
typically solved in vehicle routing 
applications on a directed graph to 
find a path of minimum reduced cost
that starts at and returns to a given node
such that resources accumulated along arcs lie within a constant resource interval specified for each node.
Similarly, we propose to solve LCPCCP for each fixed start node $s$ using dynamic programming as
an implicit enumeration scheme over all length-feasible cycles.

\subsubsection{A Label-Setting Algorithm for LCPCCP}

To this end, we form \emph{partial cycles} (paths) and iteratively extend these paths with incident unvisited nodes until $s$ is visited again, essentially closing the cycle.
If at any point a partial cycle becomes infeasible (cannot be extended to a length-feasible cycle) or is proved to be dominated (would only lead to cycles with same or larger reduced cost), it is discarded, see~\autoref{sec:pruning}.
This process is repeated until all non-dominated length-feasible cycles are explored. If there are cycles with negative reduced cost, the ones with minimum reduced cost are returned.

Let~$P=(i_0,i_1, \dotsc, i_K)$ be a path in $G=(V,E)$ with nodes $i_0, \dotsc, i_K \in V$ and edges $\{i_0,i_1\}, \dotsc, \{i_{K-1},i_K\} \in E$.
The
path $P$ can be represented
by a so-called \emph{label}~$\lbl = \lbldef$ where
\begin{itemize}
    \item $\lblnodes = \lblnodes(\lbl) = \{i_1, \dotsc, i_K\}$ is the (unordered) set of visited nodes excluding the initial occurrence~$i_0=s$ of the start node (so a closed cycle, where $i_K=s$, includes~$s$),
    \item $v = v(\lbl) = i_K$ is the last node visited in~$P$,
    \item $\redcost = \redcost(\lbl) = 1 - \sum_{i \in \lblnodes } \pi_i$ is the reduced cost of~$P$,
    \item $t = t(\lbl) = \sum_{k=1}^K t_{i_{k-1},i_k} $ is the total travel time taken by~$P$, and
    \item $q = q(\lbl) = \min_{k=0,1, \dotsc, K} q_{i_k}$ is the minimum critical time of all nodes in~$P$.
\end{itemize}
The initial label for a starting node~$s$ is given as~$\lbl_s=( \{\}, s, 1, 0, q_s)$.
During the search, a label~$\lbl = \lbldef$ can be \emph{extended} using any
edge~$\{v(\lbl),j\}$ from the last node to a new node $j \in V \setminus \lblnodes$.
This creates a new label
$\lbl^+$ where $\lblnodes(\lbl^+) = \lblnodes(\lbl) \cup \{j\}$, $v(\lbl^+) = j$, $\redcost(\lbl^+) = \redcost(\lbl) - \pi_j$, $t(\lbl^+) = t(\lbl) + t_{v(\lbl),j}$, and $q(\lbl^+) = \min\{q(\lbl), q_j\}$.
We call a label~$\lbl^* \neq \lbl_s$  \emph{fully-extended} if it represents a
cycle, i.e., if $v(\lbl^*) = s$.
Note that LCCP allows singleton cycles (loops) as part of the solution; these are included
here as label~$(\{s\}, s, 1-\pi_s, 0, q_s)$.
In order to recover the cycle corresponding to a fully-extended label, we also keep track
of each label's predecessor, i.e., the label from which it was extended.

\begin{example}[Label extension and pruning]\label{ex:labeling}
We illustrate the labeling algorithm on the instance from Example~\ref{ex:lccp-instance}, pricing from start node $s = a$ with dual values $\pi = (0.5, 0.4, 0.4, 0.6, 0.6)$ for nodes $(a, b, c, d, e)$.
The initial label is $\lbl_0 = (\emptyset, a, 1.0, 0, 5)$.
Extending to node~$b$ creates
$\lbl_1 = (\{b\}, b, 0.6, 1, 4)$,
where the reduced cost decreases by $\pi_b = 0.4$, time increases by $t_{a,b} = 1$, and critical time becomes $\min\{5, 4\} = 4$.

\emph{Feasibility pruning:}
From $\lbl_1$, extending to node~$d$ gives $\lbl_2 = (\{b,d\}, d, 0.0, 3, 4)$.
Can $\lbl_2$ extend to node~$e$?
This would give $t = 3 + 1 = 4 \leq \min\{4, 6\} = 4$, so the extension is feasible.
However, if we tried to extend further to node~$c$, we would need $t > 4$, violating the critical time constraint, so such labels are pruned.

\emph{Cycle completion:}
From $\lbl_1 = (\{b\}, b, 0.6, 1, 4)$, extending to node~$c$ gives $\lbl_3 = (\{b,c\}, c, 0.2, 2, 4)$.
Closing the cycle back to node~$a$, the reduced cost becomes $0.2 - \pi_a = 0.2 - 0.5 = -0.3 < 0$.
Thus, cycle $(a,b,c,a)$ has negative reduced cost and can be added to (RMP).
\end{example}

\subsubsection{Pruning the Search Space by Feasibility and Dominance}
\label{sec:pruning}

Besides the data structures used to represent and extend labels, the efficiency of the
resulting dynamic programming method largely depends on how much of the search space needs
to be explored in order to terminate with a provably optimal cycle.
A label~$\lbl$ can be discarded if it can be proven to only lead to length-infeasible
cycles.
This is the case if~$ t(\lbl) > q(\lbl)$.
Then for any extension~$\lbl^+$ of label~$\lbl$, we have $  t(\lbl^+) \geq t(\lbl) > q(\lbl) \geq q(\lbl^+)$.
By induction, all fully-extended labels~$\lbl^*$ obtained by successive extensions of label~$\lbl$ satisfy~$t(\lbl^*) > q(\lbl^*)$, hence the corresponding cycles violate the length constraint~\eqref{eq:lc}.

An effective technique for pruning the search space further is to exploit dominance relations.
A label~$\lbl_a$ is said to \emph{dominate} label~$\lbl_b$ if they have the same end node~$v=v(\lbl_a)=v(\lbl_b)$, and all cycles reachable from successive extensions of~$\lbl_a$ have an equal or lower reduced cost than those reachable from successive extensions of~$\lbl_b$.
Then again, by induction, we may discard label~$\lbl_b$ in the search, because at least
one minimum reduced cost cycle will remain.
Formally, this yields

\begin{lemma}\label{lemma:dominate}
    A label~$\lbl_b$ can be pruned by dominance if there exists a label~$\lbl_a$ with ~$v(\lbl_a) = v(\lbl_b)$ and
    \begin{subequations}
        \begin{align}
            \redcost(\lbl_a) &\leq \redcost(\lbl_b), \label{dom-redcost} \\
            t(\lbl_a) &\leq t(\lbl_b), \text{ and} \label{dom-time} \\
            \lblnodes(\lbl_a) &\subseteq \lblnodes(\lbl_b) \label{dom-subset}.
        \end{align}
    \end{subequations}
\end{lemma}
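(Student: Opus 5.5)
The plan is to argue directly from the definition of dominance: every length-feasible cycle reachable from $\lbl_b$ has a counterpart reachable from $\lbl_a$ that is also length-feasible and has reduced cost at most as large. The first step records the one non-obvious consequence of \eqref{dom-subset}. By definition, $q(\lbl) = \min\{q_s, \min_{i\in\lblnodes(\lbl)} q_i\}$, since the path starts at $s=i_0$ and visits exactly $\lblnodes(\lbl)$ afterwards. Hence $\lblnodes(\lbl_a)\subseteq\lblnodes(\lbl_b)$ gives $q(\lbl_a)\geq q(\lbl_b)$. This explains why the lemma needs no separate condition on $q$.

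Next, fix any sequence of nodes $j_1,\dots,j_m$ by which $\lbl_b$ is successively extended to a fully-extended label $\lbl_b^*$ representing a length-feasible cycle. If $\lbl_b$ is itself fully extended ($m=0$), then $v(\lbl_a)=s$, so $\lbl_a$ is fully extended as well and the argument below applies with $m=0$. Each $j_k$ is either a node not in $\lblnodes$ of the current $b$-label or the final return to $s$. Because the nodes added along the way are the same for both labels, the inclusion \eqref{dom-subset} is preserved at every step, so every $j_k$ is also an admissible extension of the corresponding $a$-label. Both labels end at the same node, $v(\lbl_a)=v(\lbl_b)$, so the same edges $\{v,j_1\},\{j_1,j_2\},\dots$ are used. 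We therefore obtain a fully-extended $\lbl_a^*$ representing a cycle.

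Along this common extension sequence the differences are invariant: $\redcost(\lbl_a^*)-\redcost(\lbl_b^*)=\redcost(\lbl_a)-\redcost(\lbl_b)\le 0$ and $t(\lbl_a^*)-t(\lbl_b^*)=t(\lbl_a)-t(\lbl_b)\le 0$. The first gives the cost comparison. For feasibility, the preserved inclusion gives $q(\lbl_a^*)\ge q(\lbl_b^*)$, so
\[
t(\lbl_a^*)\le t(\lbl_b^*)\le q(\lbl_b^*)\le q(\lbl_a^*).
\]
The same chain holds at every intermediate step, so no intermediate $a$-label is discarded by the feasibility pruning $t(\lbl)>q(\lbl)$.

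The step I expect to be the main obstacle is justifying ``can be pruned'' when pruning is applied repeatedly. The label $\lbl_a$ might itself be pruned later, or two labels might dominate each other when all three conditions hold with equality. I would handle this by noting that the relation defined by \eqref{dom-redcost}--\eqref{dom-subset} with equal end node is transitive, and by breaking ties with a fixed total order on labels (e.g.\ creation index) so that no two labels eliminate each other. Among the finitely many labels dominating $\lbl_b$, a maximal one under this order is never pruned. By induction over the extension steps, its extensions survive, so some minimum reduced cost cycle remains in the search.
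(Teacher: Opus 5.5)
Your proposal is correct and follows essentially the same route as the paper's proof: \eqref{dom-subset} gives $q(\ell_a)\geq q(\ell_b)$ and makes every admissible extension of $\ell_b$ admissible for $\ell_a$, after which \eqref{dom-time} and \eqref{dom-redcost} carry feasibility and the cost comparison over to the corresponding extension of $\ell_a$. You spell out the step-by-step invariance more carefully than the paper, including the $q_s$ term. Your last paragraph on repeated pruning and tie-breaking goes beyond the paper's proof, which leaves this to the induction remark before the lemma and the transitivity remark in the proof of \autoref{lemma:merge}. The only slip is your claim that a closed $\ell_b$ forces $\ell_a$ to be fully extended: this fails for $\ell_a=\ell_s$, but that case is harmless for pricing since it forces $\overline{c}(\ell_b)\geq 1>0$.
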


\begin{proof}
    From~\eqref{dom-subset} we have that the set of possible extensions of label~$\lbl_b$ is a subset of the set of possible extensions of label~$\lbl_a$.
    From~\eqref{dom-subset} it also follows that $q(\lbl_a) = \min_{i\in\lblnodes(\lbl_a)} q_i \geq \min_{i\in\lblnodes(\lbl_b)} q_i = q(\lbl_b)$.
    Together with~\eqref{dom-time} this proves that any feasible extension of~$\lbl_b$ is also feasible for~$\lbl_a$. Therefore, any cycle reachable from~$\lbl_b$ is also reachable from~$\lbl_a$, and from \eqref{dom-redcost} it has an equal or lower reduced cost.
\end{proof}

\begin{example}[Dominance]\label{ex:dominance}
Continuing Example~\ref{ex:labeling}, consider two labels both ending at node~$c$:
\begin{itemize}
    \item $\lbl_a = (\{b, c\}, c, 0.2, 2, 4)$ via path $a \to b \to c$
    \item $\lbl_b = (\{c\}, c, 0.6, 1, 4)$ via path $a \to c$
\end{itemize}
Neither label dominates the other: $\lbl_a$ has the lower reduced cost ($0.2 < 0.6$), while $\lbl_b$ has the smaller visited set and the lower time. Neither set of conditions of~\autoref{lemma:dominate} holds, so both labels must be kept; indeed both can be completed to the optimal cycle $(a,b,c,a)$ of reduced cost $-0.3$, directly from~$\lbl_a$ and from~$\lbl_b$ by first extending to~$b$.

Dominance does occur once a dual vanishes. In a later iteration with $\pi_b = 0$, the two labels become $\lbl_a = (\{b,c\}, c, 0.6, 2, 4)$ and $\lbl_b = (\{c\}, c, 0.6, 1, 4)$, since visiting~$b$ no longer changes the reduced cost. Now $\lbl_b$ dominates $\lbl_a$: both end at~$c$ with the same reduced cost, but $\lbl_b$ has the lower time ($1 < 2$) and the smaller visited set ($\{c\} \subset \{b,c\}$). The detour through~$b$ can only waste time without improving any reachable cycle, so $\lbl_a$ is pruned.
\end{example}

The dominance rule can be strengthened by enriching the visited set with \emph{unreachable} nodes.
A node $u$ is unreachable from a label $\lbl$ if visiting $u$ would make it impossible to complete a feasible cycle, i.e., if $t(\lbl) + t_{v(\lbl), u} > q(\lbl)$.
When extending a label to node $j$, for each neighbor $u$ of $j$ that is unreachable from the extended label, we add $u$ to the visited set.
As these nodes cannot be part of any feasible extension, this does not affect correctness but strengthens dominance by making the visited set larger, allowing more labels to be pruned earlier in the search.

\subsubsection{Bidirectional Search}
\label{sec:bidir}

Exploring the search space from both forward and backward directions simultaneously is
another successful technique to reduce the number of explored labels.
For the RCSPP, it has been shown that
bidirectional search can be more efficient than monodirectional search~\cite{tilk_asymmetry_2017,tilk_bidirectional_2020}.
The efficiency gain comes from the fact that the
number of non-dominated labels can grow exponentially with the length of the paths.
Hence, if it is possible to explore forward and backward paths only until a halfway point and merge them to full paths, this can drastically reduce the total number of labels generated.

For LCPCCP, we can apply bidirectional search even more effectively than for the general RCSPP. Since the graph is undirected and the resource consumption by travel times is symmetric, only extensions from
one direction are needed. Specifically, in bidirectional search for a fixed start node~$s$, we only extend labels~$\lbl$
with $t(\lbl) < \frac{q(\lbl)}{2}$.
We then \emph{merge} non-dominated labels as follows.

\newcommand{\concat}{||}
Let labels~$\lbl_a$ and $\lbl_b$ both start at $s$, end at $v=v(\lbl_a) = v(\lbl_b)$, and be otherwise disjoint, i.e., $\lblnodes(\lbl_a) \cap \lblnodes(\lbl_b) = \{v\}$.
Then we create a cycle by concatenating the path represented by~$\lbl_a$ and, in reverse order, the path represented by~$\lbl_b$.
We denote the newly merged label by $\lbl_a\concat\lbl_b$, given by
$\lblnodes(\lbl_a\concat\lbl_b) = \lblnodes(\lbl_a) \cup \lblnodes(\lbl_b) \cup \{s\}$, $v(\lbl_a\concat\lbl_b) = s$, $\redcost(\lbl_a\concat\lbl_b) = \redcost(\lbl_a) + \redcost(\lbl_b) - \pi_s + \pi_{v(\lbl_a)} - 1$,
    $t(\lbl_a\concat\lbl_b) = t(\lbl_a) + t(\lbl_b)$, and
    $q(\lbl_a\concat\lbl_b) = \min\{q(\lbl_a), q(\lbl_b)\}$, see~\autoref{fig:merge} for an illustration.
The reduced cost $\redcost(\lbl_a\concat\lbl_b)$ is exactly the reduced cost $1 - \sum_{i \in \lblnodes(\lbl_a\concat\lbl_b)} \pi_i$ of the merged cycle, where the three correction terms applied to the naive sum $\redcost(\lbl_a) + \redcost(\lbl_b)$ account for the conventions in the label definition above. Each label's reduced cost carries the constant~$1$ (the cost of one cycle), so the duplicate is removed by~$-1$. The shared end node~$v(\lbl_a)=v(\lbl_b)$ lies in both~$\lblnodes(\lbl_a)$ and~$\lblnodes(\lbl_b)$, so its dual is subtracted twice and added back once by~$+\pi_{v(\lbl_a)}$. Finally, the start node~$s$ belongs to the closed merged cycle but, by convention, to neither~$\lblnodes(\lbl_a)$ nor~$\lblnodes(\lbl_b)$; its dual is therefore included by~$-\pi_s$.
We only merge a disjoint pair whose resulting cycle is length-feasible, i.e., for which $t(\lbl_a) + t(\lbl_b) \leq \min\{q(\lbl_a), q(\lbl_b)\}$.

\begin{example}[Bidirectional search]\label{ex:bidir}
Consider pricing from $s = a$ in Example~\ref{ex:lccp-instance} with $\pi = (0.5, 0.4, 0.4, 0.6, 0.6)$.
In bidirectional mode, we extend labels only while $t(\lbl) < q(\lbl)/2$.
Starting from $\lbl_0 = (\emptyset, a, 1.0, 0, 5)$, the labels are created in the order
\begin{align*}
\lbl_0 &\to \lbl_1 = (\{b\}, b, 0.6, 1, 4): && t = 1 < q/2 = 2 \text{, continue} \\
\lbl_0 &\to \lbl_2 = (\{c\}, c, 0.6, 1, 4): && t = 1 < q/2 = 2 \text{, continue} \\
\lbl_1 &\to \lbl_3 = (\{b,c\}, c, 0.2, 2, 4): && t = 2 \geq q/2 = 2 \text{, stop extending} \\
\lbl_2 &\to \lbl_4 = (\{b,c\}, b, 0.2, 2, 4): && t = 2 \geq q/2 = 2 \text{, stop extending}
\end{align*}
Labels $\lbl_3$ and $\lbl_2$ both end at node~$c$ and are
disjoint except for the last node:
$\lblnodes(\lbl_3) \cap \lblnodes(\lbl_2) = \{b,c\} \cap \{c\} = \{c\}$.

Merging them produces cycle $(a, b, c, a)$ with $t = 2 + 1 = 3 \leq 4 = q$.
The reduced cost is $\redcost(\lbl_3) + \redcost(\lbl_2) - \pi_a + \pi_c - 1 = 0.2 + 0.6 - 0.5 + 0.4 - 1 = -0.3$.
No length-feasible cycle has a smaller reduced cost, so $(a, b, c, a)$ is an optimal solution to this pricing problem.
\end{example}

\begin{figure}[!htbp]
    \centering
    \begin{tikzpicture}[every node/.style={draw, circle, minimum size=0.7cm, font=\small}]
        \node[label=180:$\lbl_3$] (a1) at (0,0) {$a$};
        \node (b1) at (1.3,0) {$b$};
        \node (c1) at (2.6,0) {$c$};
        \draw[->, black] (a1) -- (b1);
        \draw[->, black] (b1) -- (c1);

        \node[label=0:$\lbl_2$] (a2) at (5.6,0) {$a$};
        \node (c2) at (4.3,0) {$c$};
        \draw[->, black] (a2) -- (c2);

        \node[label=180:$\lbl_3 \concat \lbl_2$] (a3) at (0,-1.4) {$a$};
        \node (b3) at (1.3,-1.4) {$b$};
        \node (c3) at (2.6,-1.4) {$c$};
        \node (a4) at (3.9,-1.4) {$a$};
        \draw[->, black] (a3) -- (b3);
        \draw[->, black] (b3) -- (c3);
        \draw[->, black, thick] (c3) -- (a4);
    \end{tikzpicture}
\caption{Merging the two partial cycles $\lbl_3$ and $\lbl_2$ from Example~\ref{ex:bidir}, which share only their end node~$c$, into the cycle $(a,b,c,a)$. The thick edge is contributed by~$\lbl_2$ traversed in reverse.}
\label{fig:merge}
\end{figure}

We proceed to prove formally that considering the pairs of labels described above for merging is sufficient to obtain an optimal solution of LCPCCP, even in the presence of dominance checks.

\begin{theorem}\label{lemma:merge}
   Dynamic programming with dominance and bidirectional search finds two labels~$\lbl_a$ and~$\lbl_b$ such that $\lbl_a\concat\lbl_b$ represents a length-feasible cycle with minimum reduced cost.
\end{theorem}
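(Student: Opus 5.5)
Fix the start node~$s$ and let $\cycle^*=(s=i_0,i_1,\dots,i_K=s)$ be a length-feasible cycle of minimum reduced cost through~$s$; the singleton loop is handled separately, so assume $K\geq 2$. Write $T=t(\cycle^*)\leq Q=q(\cycle^*)$. Orient the cycle in both directions and let $\tau_k=\sum_{m=1}^k t_{i_{m-1},i_m}$ be the prefix times. Since $\tau$ is nondecreasing from $0$ to $T$, there is a first index $k$ with $\tau_k\geq T/2$ (or $K$ itself). Split the cycle at $v=i_k$: the forward prefix $P_a=(s,i_1,\dots,i_k)$ has time at least $T/2$, and the reversed suffix $P_b=(s,i_{K-1},\dots,i_k)$ has time $T-\tau_k\leq T/2$. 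If $k=K$, then $P_b$ degenerates, but then the reverse orientation gives a proper split. So I will choose, among the two orientations, a split vertex $v\neq s$.

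The key observation is that every proper prefix of $P_a$, namely $(s,\dots,i_m)$ with $m<k$, has time $\tau_m<T/2\leq Q/2\leq q$ of that prefix. Hence all these prefixes satisfy the extension criterion $t(\lbl)<q(\lbl)/2$, and $P_a$ itself is generated as a label, since extension from its parent is allowed. Similarly, every proper prefix of $P_b$ has time strictly less than $T-\tau_k\leq T/2$, so $P_b$ is generated too. Here I need to check carefully that the strict-versus-nonstrict boundary cases work, possibly by taking $k$ as the first index with $\tau_k\geq T/2$ and arguing that the complementary prefixes are strictly below. Without dominance, both labels exist, they share only~$v$, and the merge condition $t(\lbl_a)+t(\lbl_b)=T\leq Q$ holds. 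So $\lbl_a\concat\lbl_b$ represents $\cycle^*$, with reduced cost given by the merge formula.

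The main obstacle is dominance: the labels along $P_a$ or $P_b$ may be pruned. I will argue by a replacement argument. Suppose $\lbl_a$, or an ancestor of it, is pruned by a label $\lbl'$ via \autoref{lemma:dominate}. Then $\lbl'$ can be extended along the same remaining path, because its visited set is a subset and its time is no larger, so the criterion $t<q/2$ still holds. This follows since $q(\lbl')\geq q$ and $t(\lbl')\leq t$ propagate along identical extensions. The resulting label $\tilde\lbl_a$ ends at~$v$ and has $\redcost\leq$, $t\leq$, and $\lblnodes\subseteq$ compared with $\lbl_a$. Therefore $\tilde\lbl_a$ is still disjoint from $\lbl_b$ except at~$v$, still merge-feasible because $t$ decreases and $q$ does not decrease, and gives reduced cost $\leq\redcost(\cycle^*)$, hence optimal. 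Its path is a genuine simple path because it avoids the nodes of $\lbl_b$, and $s$ is excluded by the label convention, so the merged object is a genuine cycle. Disjointness with respect to $s$ needs a remark: $s$ lies in neither visited set.

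To make this rigorous, I will induct on a well-founded order, for instance the reduced cost then the time of labels, or the order in which labels are processed. This handles the case where a dominating label is itself later dominated: each replacement weakly improves all three criteria, so the chain terminates at a surviving label. I apply the same argument independently to $\lbl_b$, keeping $\tilde\lbl_a$ fixed; its subset property preserves disjointness. If dominance is checked with the enlarged unreachable-node sets, I also need that the unreachable nodes added to a label are never on $\cycle^*$'s remaining portion. This holds because such nodes cannot appear in any feasible completion, and the partner path is part of a feasible completion. This is the step I expect to require the most care.
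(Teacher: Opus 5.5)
Your proof is correct, given the same extra assumption the paper's proof silently needs (second paragraph below). It reorganizes the paper's argument in a useful way.

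\emph{Comparison with the paper.} The paper takes $\cycle$ to be a minimum-reduced-cost cycle whose forward labels survive a \emph{monodirectional} search with dominance. It lets $\lbl_a=\flbl_k$ be the first forward label with $t(\flbl_k)\geq q(\flbl_k)/2$, a label-dependent threshold, and bounds the complementary reversed label by $t(\blbl_k)\leq q(\cycle)/2\leq q(\blbl_k)/2$. It applies a dominance replacement only to that backward label, checking the merge conditions exactly as you do. You instead split at the global threshold $T/2$ and run the replacement symmetrically on both halves. This gains three things:
\begin{itemize}
\item Your split index always exists because $\tau_K=T$. The paper's ``first label with $t\geq q/2$'' does not exist when $t(\cycle)<q(\cycle)/2$, a case its proof skips.
\item You never have to carry non-dominance of the forward labels over from a different (monodirectional) search to the bidirectional one.
\item Your step of extending a dominator of an \emph{ancestor} along the remaining path, checking $t<q/2$ at each extension, is what the paper's appeal to ``transitivity of dominance'' leaves implicit when $\blbl_k$ is never generated at all.
\end{itemize}
The paper's version is shorter. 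Yours needs a termination argument for replacement chains. That is best made via the time at which labels are pruned (each pruned label is pruned by one alive at that moment), not via the order on $(\redcost,t)$, which is not strict when labels tie in every resource.

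\emph{The boundary case.} The gap you flagged is genuine, and the paper's proof has the same one. Your claim that proper prefixes of $P_b$ have time strictly below $T-\tau_k$ requires $t_{i_k,i_{k+1}}>0$. Suppose instead that edge has zero time, $\tau_k=T/2$, $T=Q$, and the parent of $P_b$ has critical minimum $Q$. Then that parent has $t=q/2$ and is not extended. No other split fixes this, because the statement itself fails. Take nodes $s,a,b$ with $t_{s,a}=t_{b,s}=1$, $t_{a,b}=0$, all critical times $2$ and all duals $1$. The optimum $(s,a,b,s)$ has reduced cost $-2$. But the labels for $(s,a)$ and $(s,b)$ both have $t=1=q/2$, so they are never extended, and they end at different nodes. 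The best available merge is therefore a two-node cycle of reduced cost $-1$. The paper only shows $t(\blbl_k)\leq q(\blbl_k)/2$, which likewise does not ensure that the predecessor of $\blbl_k$ is extended.

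You therefore need to state an assumption. Either all travel times are positive, and then your strict inequalities hold as written. Or the extension rule reads $t(\lbl)\leq q(\lbl)/2$, and then the non-strict bound $T-\tau_m\leq T/2$ suffices on the $b$-side with nothing else changing.

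\emph{Unreachable nodes.} You justify the unreachable-node enlargement by saying such nodes lie on no feasible completion. That matches the paper's verbal definition, but not its operational test $t(\lbl)+t_{v(\lbl),u}>q(\lbl)$ unless travel times are metric. Neither the theorem nor the paper's proof covers that enhancement, so it is cleaner to leave it out of scope.
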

\begin{proof}
    Let $C=(i_0,i_1,\ldots,i_K)$ be a cycle with minimum reduced cost that is found by some version of full, monodirectional forward search with dominance checks.
    Suppose $\vec{\mathscr{L}}_C = (\flbl_0, \flbl_1, \ldots, \flbl_{K} )$ is the sequence of labels leading to~$C$ in this search, then choose~$\flbl_{k}$ to be the first label in~$\mathscr{\vec{L}}_C$ where~$t(\flbl_{k}) \geq q(\flbl_{k})/2$ is satisfied.
    Since~$\flbl_{k}$ is the first label with~$t(\flbl_{k}) \geq q(\flbl_{k})/2$, its predecessor~$\flbl_{k-1}$ satisfies~$t(\flbl_{k-1}) < q(\flbl_{k-1})/2$ and is therefore extended by bidirectional search; this extension generates~$\lbl_a:=\flbl_{k}$, a terminal forward label that is itself not extended further.
    It represents the path $(i_0,i_1,\ldots,i_k)$.

Next, consider a sequence of labels that would generate the same cycle, but in reverse order, i.e., $(i_K,i_{K-1},\ldots,i_0)$.
Denote this sequence of labels by $\cev{\mathscr{L}}_C = ( \blbl_K, \blbl_{K-1}, \ldots, \blbl_{0} )$,
    then $\blbl_{k}$ represents the path $(i_K,i_{K-1},\ldots,i_k)$.
    From $t(C) = t(\flbl_{k})+t(\blbl_{k}) \leq q(C)$ and $t(\flbl_{k}) \geq q(\flbl_{k})/2$ it follows that
    \[
    t(\blbl_{k}) = t(C) - t(\flbl_{k}) \leq q(C) - \frac{q(\flbl_{k})}{2} \leq q(C) - \frac{q(C)}{2} = \frac{q(C)}{2} \leq \frac{q(\blbl_{k})}{2},
    \]
hence $\blbl_{k}$ is length-feasible and within the halfway point.
Therefore, unless it is dominated, $\blbl_{k}$ is found by bidirectional search, and we can choose $\lbl_{b} := \blbl_{k}$, since $\lbl_{a} \concat \lbl_{b}$ represents~$C$.
If $\blbl_{k}$ is discarded due to dominance, then (by transitivity of dominance) bidirectional search must find another label that dominates $\blbl_{k}$.
Let $\blbl_{k}$ be this dominating label.
From the conditions of \autoref{lemma:dominate} it follows that $\lbl_a$ and $\lbl_b$ can be merged and that the result $\lbl_a\concat\lbl_b$ represents a length-feasible cycle with minimum reduced cost:

Labels $\lbl_a$ and $\lbl_b$ trivially start at the same node~$s=i_0=i_K$, 
and because $v(\lbl_b) = v(\blbl_k)=i_k$, they also end at the same node $i_k$.
Due to $\lblnodes(\lbl_b) \subseteq \lblnodes(\blbl_{k})$, we have
$\lblnodes(\lbl_a) \cap \lblnodes(\lbl_b) \subseteq \lblnodes(\lbl_{a}) \cap \lblnodes(\blbl_{k}) = \{i_k\}$, i.e., they do not overlap otherwise.
Hence, they can be merged to $\lbl_a\concat\lbl_b$.
It remains to show that this merged label is length-feasible and has the same reduced cost as $C$.
This follows from the fact that $\lbl_a\concat\blbl_k$ is feasible and optimal, and $t(\lbl_b) \leq t(\blbl_{k})$, $q(\lbl_b) \geq q(\blbl_{k})$, and $\redcost(\lbl_b) \leq \redcost(\blbl_{k})$ hold by domination.

\end{proof}

Note that bidirectional search may produce more cycles than monodirectional search.
The reason is that many of the labels at the halfway point
could be dominated by labels found later in monodirectional search.
Adding these potentially sub-optimal cycles to (RMP) can at the same time aid the convergence of column generation and make re-optimizing the LP much harder.
We try to safeguard against this effect by sorting the cycles by reduced cost and returning at most a fixed number ($50$ per starting node in our implementation) of the ones with lowest reduced cost.

\subsection{Non-elementary Cycle Relaxation}
\label{ng-path}

The idea of solving a relaxation of the pricing problem in order to obtain valid lower bounds has been successfully applied in vehicle routing~\cite{ng-paths, dssr}.
The similarity of the pricing problem, RCSPP, to LCPCCP allows us to adapt this technique as follows.
We call a cycle \emph{elementary} if it visits every node at most once, and \emph{non-elementary} if it revisits one or more nodes.
We employ the so-called \emph{ng-paths} relaxation~\cite{ng-paths}, allowing some non-elementary cycles to be generated from the pricing problem.
This gives us a lower bound on the minimum reduced cost, as it also contains the set of all elementary cycles.

In order to obtain a hierarchy of non-elementary relexations that converges to the result of LCPCCP with only elementary cycles,
we define neighborhoods $\mathcal{N}(i) \subseteq V$ for each node $i \in V$, representing the ``memory'' of visited nodes maintained during label extension.
When extending from node $v$ to node $j$, we record the visit (add $j$ to the visited set) only if $j \in \mathcal{N}(v)$ or $v \in \mathcal{N}(j)$, and we forbid revisiting a node in the visited set.
Hence a subcycle cannot occur between nodes that are in each other's neighborhoods.
Conversely, nodes outside mutual neighborhoods may be revisited, allowing non-elementary cycles.
Setting $\mathcal{N}(i) = V$ for all $i$ recovers the elementary pricing algorithm.
We initialize $\mathcal{N}(i) = \{i, j^*\}$ where $j^* = \arg\min_{j \neq i} t_{ij}$ is the closest neighbor of~$i$.

In the remainder of this section, we describe how subcycles in the generated cycles are detected, how the neighborhoods are expanded to prevent these subcycles from reoccurring, how a valid lower bound is obtained even while non-elementary cycles are generated, and how the relaxation interacts with bidirectional search.

\ourparagraph{Subcycle Detection}
When pricing returns a cycle $C = (i_0, i_1, \ldots, i_K = i_0)$, we check whether it is elementary by detecting repeated nodes.
We maintain a map from each node to its first occurrence index in the cycle.
If node $i_k$ was previously seen at index $k' < k$, then $(i_{k'}, i_{k'+1}, \ldots, i_{k-1})$ forms a subcycle.
All such subcycles are collected for neighborhood expansion.

\ourparagraph{Neighborhood Expansion}
For each detected subcycle $S = (s_1, s_2, \ldots, s_m)$, we expand the neighborhoods of all involved nodes to include each other:
\[
\mathcal{N}(s_j) \leftarrow \mathcal{N}(s_j) \cup \{s_1, s_2, \ldots, s_m\} \quad \text{for all } j = 1, \ldots, m.
\]
This ensures that the same subcycle cannot reoccur: since all nodes in $S$ are now mutually in each other's neighborhoods, any future visit among them will be remembered and a revisit will be blocked.

After expanding neighborhoods, pricing is repeated.
This process continues until all returned cycles are elementary, at which point the minimum reduced cost cycle is guaranteed to be elementary.
The neighborhoods grow monotonically, and in the worst case converge to $\mathcal{N}(i) = V$ for all $i$, recovering the exact elementary algorithm.

\ourparagraph{Lagrangian Bound}
Importantly, even when non-elementary cycles are returned, the pricing problem provides a valid Lagrangian lower bound on the optimal LP objective.
Let $z^*_s$ denote the minimum reduced cost found for starting node~$s$ (possibly from a non-elementary cycle).
Then $z^*_{MP} \geq z^*_{RMP} + \sum_{s \in V} \min\{0, z^*_s\}$ is a valid lower bound on the master problem, since the relaxed pricing problem contains all elementary cycles.

\ourparagraph{Bidirectional Search}
The ng-relaxation is compatible with bidirectional search from \autoref{sec:bidir} after relacing the merge condition in order to account for the partial tracking of the visited sets.
In the elementary case, we require $\tilde{\lblnodes}(\lbl_a) \cap \tilde{\lblnodes}(\lbl_b) = \{v\}$, where~$v = v(\lbl_a) = v(\lbl_b)$.
However, with partially tracked visited sets, the meeting node $v$ itself may not be tracked if its incident edges lie outside the mutual neighborhoods.
We therefore relax the condition to
\[
\tilde{\lblnodes}(\lbl_a) \cap \tilde{\lblnodes}(\lbl_b) \subseteq \{v\},
\]
i.e., the two partial paths may share at most the meeting node $v$, and possibly not even that if $v$ is untracked.
If they share any tracked node other than $v$, a genuine overlap exists and the merge is rejected.
Untracked nodes may still appear in both paths, potentially creating non-elementary cycles, which are handled by the neighborhood expansion procedure.

\subsection{Greedy Pricing Heuristic}
\label{greedy-heur}

In order to avoid expensive labeling iterations, we first run a greedy heuristic to find improving cycles.
The heuristic explores the neighborhood of each cycle $C \in \Omega'$ currently in (RMP).
For each cycle $C$, we first remove all intermediate nodes with negative dual values, provided the resulting cycle remains length-feasible.
Then, for each node $i \notin C$ with positive dual $\pi_i > 0$, we attempt to insert $i$ at every position in the cycle, keeping all feasible insertions that yield negative reduced cost of the cycle.

This heuristic is run in parallel over all cycles in $\Omega'$.
The resulting improving cycles are sorted by reduced cost, and the best ones are added to (RMP).
To avoid adding too many columns in a single iteration, we stop after adding at most $|V|$ new columns.
Only if no improving cycles are found do we proceed to the full labeling algorithm.
We observed that this greedy approach is very effective at driving down the LP objective value early in the column generation process and significantly reduces the number of expensive labeling iterations required.

\section{Exploiting the Triangle Inequality}\label{sec:trieq}
The algorithm developed up to this point works for any choice of non-negative travel times.
For many real-world applications, such as in aerial vehicle routing where distances are Euclidean, we know additionally that the triangle inequality holds, i.e., that
$t_{i,j} \leq t_{i,k} + t_{k,j}$ holds for all~$\{i,j\},\{i,k\},\{j,k\} \in E$.
Under this assumption, the set partitioning formulation can be turned into a set covering formulation, which is easier to solve because the dual solution $\pi$ becomes nonnegative.
A solution for the set partitioning formulation can always be retrieved from a set covering solution by removing nodes that appear in more than one cycle from all but one cycle.
This is guaranteed to be feasible since removing a node can only decrease the total travel time and increase the minimum critical time of the cycle.

\ourparagraph{Per-node detection}
Rather than assuming the triangle inequality holds globally, we detect for each node $i \in V$ whether it can always be feasibly removed from any cycle.
Specifically, node $i$ satisfies the triangle inequality if $t_{j,k} \leq t_{i,j} + t_{i,k}$ for all pairs $j, k \in V$.
Let $V_{trieq} \subseteq V$ denote the set of such nodes.
Note that a node enters $V_{trieq}$ only when all required bypass edges exist (missing edges with travel time $\infty$ fail the test), keeping it valid in general.
For nodes in $V_{trieq}$, we use set covering constraints ($\geq 1$) in the master problem~\eqref{prob:imp}, while the remaining nodes retain set partitioning constraints ($= 1$).
This mixed formulation allows us to exploit the triangle inequality even when it holds only partially.
The effect of using covering constraints is that the dual variables of nodes in $V_{trieq}$ are restricted to be nonnegative.
This can weaken the LP bound but also aids convergence by stabilising the dual variables, preventing their oscillation in one direction.

\ourparagraph{Skipping zero-dual nodes}
By the arguments above, we can prove the following lemma showing that nodes with zero dual can be safely ignored during pricing.
\begin{lemma}\label{lemma:tri-eq}
  Let $i \in V_{trieq}$ and $\pi\in\mathbb{R}^V$ a dual solution with $\pi_i = 0$.
  Then there exists an optimal cycle~$C$ to the pricing problem such that~$i \notin C$.
\end{lemma}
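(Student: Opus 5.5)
The plan is an exchange argument. I start from an arbitrary optimal cycle of the pricing problem and, if it contains $i$, remove $i$ without losing feasibility or optimality. Let $C^*$ be an optimal cycle, i.e., a length-feasible cycle minimizing $\redcost(C) = 1 - \sum_{j\in C}\pi_j$; one exists because the set of length-feasible elementary cycles is finite and nonempty (singleton loops are feasible since $q_j>0$). If $i\notin C^*$ there is nothing to prove. Otherwise write $C^*=(i_0,\dots,i_K=i_0)$ with $i=i_k$, and build $C'$ by deleting the occurrence of $i$.

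\textbf{Case 1: $C^*$ is the singleton loop $(i,i)$.} Then $\redcost(C^*)=1-\pi_i=1$. Every singleton loop $(j,j)$ is length-feasible, since $t=0\le q_j$, and has reduced cost $1-\pi_j$. I would have to argue that some cycle avoiding $i$ has reduced cost at most $1$, for instance a loop at a node $j\ne i$ with $\pi_j\ge 0$. When $\pi\ge 0$ holds, as under covering rows, this is immediate. In general it is the delicate point: if every other dual is negative, the only cycle of reduced cost $1$ might be the loop at $i$. I would handle it by noting that the statement concerns the pricing problem as used, i.e., the search for cycles with negative reduced cost, so a cycle with $\redcost\ge 0$ is never relevant. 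I would phrase the conclusion as: whenever an optimal cycle has negative reduced cost, there is one avoiding $i$. I expect this to be the main obstacle, namely making the statement precise in the degenerate cases.

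\textbf{Case 2: the $2$-cycle $(i,j,i)$.} Replace it by the loop $(j,j)$. Its time is $0\le q_j$, and its reduced cost is unchanged because $\pi_i=0$.

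\textbf{Case 3: general case, $K\ge 3$.} Let $j=i_{k-1}$ and $l=i_{k+1}$ be the neighbours of $i$ on the cycle, and take $C'=(i_0,\dots,i_{k-1},i_{k+1},\dots,i_K)$, re-rooting first if $i$ is the start node. Since $i\in V_{trieq}$, the edge $\{j,l\}$ exists and $t_{j,l}\le t_{i,j}+t_{i,l}$. Hence $t(C')=t(C^*)-t_{j,i}-t_{i,l}+t_{j,l}\le t(C^*)$. Removing a node can only increase the minimum, so $q(C')\ge q(C^*)$. Together these give $t(C')\le t(C^*)\le q(C^*)\le q(C')$, so $C'$ is length-feasible and still elementary.

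Finally, $\redcost(C')=\redcost(C^*)+\pi_i=\redcost(C^*)$, so $C'$ is optimal and $i\notin C'$. Since $C^*$ is elementary, $i$ occurs only once, so a single deletion suffices.
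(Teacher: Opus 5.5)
Your proof is correct and is the same exchange argument as the paper's: delete $i$, bypass it via the edge $\{j,l\}$ whose existence and length bound come from $i\in V_{trieq}$, note that the minimum critical time can only increase, and use $\pi_i=0$ to keep the reduced cost unchanged. Your case split is more careful than the paper, whose proof tacitly assumes $i$ has a genuine predecessor and successor on $C^*$. You are right that for an arbitrary $\pi\in\mathbb{R}^V$ the literal statement fails when the loop $(i,i)$ is the unique optimum (e.g.\ $\pi_j<0$ for all $j\neq i$). Your restriction to negative reduced cost is a valid fix; alternatively, for an optimal (RMP) dual one has $\sum_j \pi_j = z^*_{RMP}\ge 1$, so some $j\neq i$ has $\pi_j>0$ and the loop at $j$ strictly beats the loop at $i$.
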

\begin{proof}
  Let $C^* = (\ldots, j, i, k, \ldots)$ be an optimal cycle containing node $i$, where $j$ and $k$ are the predecessor and successor of $i$ in $C^*$.
  Construct a new cycle $C' = (\ldots, j, k, \ldots)$ by removing $i$ and connecting $j$ directly to $k$.
  Because $i \in V_{trieq}$, the triangle inequality gives $t_{j,k} \leq t_{j,i} + t_{i,k}$, so $t(C') \leq t(C^*)$.
  Moreover, removing $i$ can only increase the minimum critical time, so $q(C') \geq q(C^*)$.
  Therefore $C'$ is feasible.
  Since $\pi_i = 0$, the reduced cost is unchanged:
  \[
  \redcost(C') = 1 - \sum_{v \in C'} \pi_v = 1 - \sum_{v \in C^*} \pi_v + \pi_i = \redcost(C^*)\,.
  \]
  Thus $C'$ is also optimal, and $i \notin C'$.
\end{proof}
Based on this lemma, we reduce the search space during label extension by skipping all neighbors $j \in V_{trieq}$ with $\pi_j = 0$.

\ourparagraph{Preemptive pruning}
Under the triangle inequality, we can also prune labels more aggressively.
When extending a label to node $j$, we check whether the cycle can still be closed feasibly by computing the time to directly return{} to the start node.
If $t(\lbl) + t_{v(\lbl), j} + t_{j, s} > q(\lbl^+)$, we skip this extension, since no feasible cycle can be formed.
This preemptive check avoids exploring labels that cannot lead to feasible cycles.

\ourparagraph{Post processing for set partitioning solutions}
The benefits above, namely the nonnegative and more stable duals and the pricing reductions of \autoref{lemma:tri-eq}, all stem from solving with set covering constraints for the nodes in $V_{trieq}$. Recovering a set partitioning solution from the covering solution afterwards is essentially free.
Since a node in $V_{trieq}$ may be covered by more than one cycle, we keep only its first occurrence and remove it from the remaining cycles.
By the triangle-inequality argument given at the start of this section, each modified cycle stays feasible, and the number of cycles is unchanged; the result is therefore a valid set partitioning solution with the same objective, hence optimal for the original problem.

\section{Cutting Planes}\label{sec:cuts}
Given a fractional LP solution $\lambda^*$, a cutting plane is a valid inequality that is violated by $\lambda^*$ but satisfied by all integer feasible solutions.
Strengthening the LP relaxation with the addition of valid inequalities has been shown to be effective~\cite{jepsen, clique, costa_exact_2019}.
In this section we present two popular families of cuts for set partitioning-based column generation, namely \emph{subset row cuts} (SRCs) and \emph{clique cuts} (CCs), followed by a discussion on whether generated cuts should be respected in the pricing routine.

\subsection{Subset Row Cuts}
Subset row cuts were introduced first by \citet{jepsen}.
For a subset of nodes (rows) $R \subseteq V$ (here representing cover constraint for each node) and $k \in \{1, \dots, |R|\}$ , an SRC has the form
\begin{equation}
    \sum_{\cycle \in \Omega} \lfloor \frac{1}{k} \sum_{i \in \cycle} a_i^{\cycle} \rfloor \lambda_{\cycle} \leq \lfloor \frac{1}{k} |R| \rfloor\,.
\end{equation}
In our implementation we use $k = 2$ and $|R| = 3$.
Intuitively, the cut is a Chv\'atal--Gomory rounding of the partitioning constraints of the rows in~$R$: summing the~$|R|$ constraints and dividing by~$k$ yields a fractional right-hand side that can be rounded down.
For our choice $k=2$ and $|R|=3$, a cycle contributes to the left-hand side only if it covers at least two of the three nodes in~$R$.
Since each of these nodes is covered exactly once in an integral partition, at most one selected cycle can cover two or more of them, which is precisely what the cut enforces; a fractional solution may instead spread the coverage of~$R$ across several cycles and thereby violate it.
The separation routine iterates through all combinations of nodes and computes coefficients to find violated cuts.

To respect these generated cuts in the pricing problem, we augment each partial cycle~$\lbl$ with one extra resource per active cut.
Let~$S$ denote the set of currently active subset-row cuts, where each cut~$r \in S$ is generated by a node set~$R_r \subseteq V$.
The added resource is a vector
\[
\rho(\lbl) \in \{0, 1, \ldots, |R_r| - 1\}^{|S|}
\]
whose entry~$\rho_r(\lbl) = |R_r \cap \lblnodes(\lbl)|$ counts how many of the nodes in~$R_r$ the partial cycle has visited.
We modify the accumulated reduced cost~$\overline{c}(\lbl)${} to include the new duals by subtracting the corresponding dual~$\sigma_r$ for every~$k$ visits to the nodes in~$R_r$.
The dominance rule is extended by the additional check~$\rho_r(\lbl) \leq \rho_r(\lbl')$ for all~$r \in S$, i.e., $\lbl$ must visit the nodes of each cut at most as many times as~$\lbl'$ does in order to dominate it.

\subsection{Clique Cuts}
Clique cuts were first presented in~\cite{original-setpart} for the set partitioning problem and later used for a variant of the vehicle routing problem in~\cite{clique}, including how the dynamic programming pricing problem solver should be updated to respect the new duals from the generated cuts.
Intuitively, two cycles that share a node can never be selected together in a partition, since that node would then be covered twice; hence among any set of cycles that pairwise conflict in this way, at most one may be used.

These cuts are based on cliques found on the so-called conflict graph.
The conflict graph encodes which variables can cause infeasibility if both set to one in an integer solution.
Each variable is represented by a node in this graph and an edge exists if setting both variables to 1 results in infeasibility.
Consequently, every cycle variable is represented by a node, and an edge exists between a pair of nodes if the corresponding cycles visit any common node.
A clique $Q \subseteq \Omega$ on that graph represents a set of variables of which at most one can have value one in any feasible (integer) solution, giving the valid inequality
\begin{equation}
    \sum_{\cycle \in Q} \lambda_\cycle \leq 1\,.
\end{equation}

We separate these cuts heuristically by enumerating $4$-combinations of the cycle variables in (RMP) and keeping those that form a clique in the conflict graph, i.e., every pair of cycles~$I, J$ in the combination shares at least one node, $I \cap J \neq \emptyset$.{}
We add all cuts that separate the current solution of (RMP).

\subsection{Respecting Cuts in Pricing}

Both SRCs and CCs are \emph{non-robust}, i.e., they can remove solutions from the feasible region of the pricing problem.
\emph{Robustness} is not strictly required for guaranteeing optimality of \eqref{prob:imp},
since cuts (constraints) are not problem-defining, so the pricing problem can safely ignore them and still claim optimality~\cite{spielt}.
However, respecting these cuts can lead to faster convergence of the column generation loop, with the tradeoff of making the pricing problem more complex.
Therefore, it is not entirely obvious which approach is better.

Ignoring the generated cuts in the pricing problem allows us to generate many more cuts than would otherwise be possible before rendering the pricing problem intractable.
This is the route we take for clique cuts: although they can be respected in the pricing problem as described in~\cite{clique}, doing so increases the complexity of the pricing problem, so we add them only to~(RMP) and ignore their duals in pricing. For subset-row cuts, we additionally evaluate respecting them in the pricing problem (\autoref{sec:bpc}).

Generally, even when a cut is ignored during pricing, it still constrains the newly generated columns in the master: as each column is added to~(RMP), we \emph{lift} every active cut to it by computing the column's coefficient in that cut. This lifting recovers part of the cut's strengthening effect at essentially no additional pricing cost.

\subsection{Managing Cuts during Branch-and-Price}

To balance the benefit of cutting planes against the overhead of separation and LP re-optimization, we limit the number of cuts added.
In each separation round, we add at most $10$ violated cuts per family (SRCs and CCs), and maintain a total limit of $100$ cuts across the entire branch-and-bound tree.
When the limit is reached, we rely on SCIP's cut management to remove inactive cuts.

When branching creates child nodes, we inherit the cuts from the parent node.
This ensures that the strengthened formulation is preserved throughout the tree without re-separating the same cuts.
For newly generated columns added to (RMP), we compute and update their coefficients for all existing cuts, ensuring the cut constraints remain valid as the formulation evolves.

\section{Numerically-safe Lowerbounds}\label{sec:numerical}
Up until this point, we have assumed that (RMP) is solved exactly, i.e., without any numerical errors.
However, in practice, (RMP) is typically solved with numerical tolerances using floating-point solvers, which can potentially lead to numerical errors through the accumulation of tiny roundoff errors.
Ensuring numerical safety in LP-based branch-and-bound has been studied extensively~\cite{neumaier2004safe, eifler2022exact, eifler2024safe},
and also in the context of exact branch and price the need for numerically safe computation has been identified for capacitated vehicle routing~\cite{safe-cvrp} and bin packing~\cite{baldacci2024numerically}.
In this section we follow the proposal made in the latter works to compute numerically safe bounds by scaling the dual values to integers and solving the pricing problem in exact integer arithmetic, and discuss how the DP-based pricing routine can be adapted to avoid errors arising from floating-point arithmetic.

Let us first consider the dual of (RMP),
\begin{equation}\label{prob:drmp}\tag{DRMP}
    \begin{array}{llll}
    \max\;\; & \sum_{i \in V} \pi_i & \\[1ex]
    \text{s.t.} & \sum_{i \in \cycle} \pi_i &\leq 1  & \quad \fa \cycle \in \Omega', \\[1ex]
                & \pi_i \in \mathbb{R} && \quad \fa i \in V\,.
           \end{array}
\end{equation}
When solving (RMP) with a standard floating-point LP solver one would get a primal-dual pair of solutions $(\lambda, \pi)$ that satisfy primal and dual constraints within some tolerance.
Concretely, we obtain an approximately feasible dual solution $\pi$ of (DRMP) that satisfies
$$
\sum_{i \in \cycle} \pi_i \leq 1 + \Delta \quad \fa \cycle \in \Omega'
$$
for some $\Delta > 0$, typically between $10^{-6}$ and $10^{-9}$.
In unsafe column generation, one would conclude optimality of the master problem when no columns with negative reduced cost exist, i.e., if all columns satisfy $1 - \sum_{i \in \cycle} \pi_i \geq -\Delta$, and use the dual objective $\sum_{i \in V} \pi_i$ as a lower bound.
However, if $-\Delta \leq 1 - \sum_{i \in \cycle} \pi_i < 0$, the branch-and-price result may be a suboptimal solution because (i) the lower bound is too large and (ii) improving columns may be missed.

We first focus on the first issue and discuss how to derive safe dual bounds
given an infeasible dual solution $\pi^{inf}$.
The key idea is to repair this infeasibility by reducing the value of the left-hand side of the dual constraints.
To ensure that no inaccuracies occur due to floating-point arithmetic, we first scale up and approximate the dual values by integers.
We multiply the dual values by a large scaling factor $M$ and round down, obtaining integers
\begin{equation}\label{equ:piint}
\pi^{int} = \lfloor M \cdot \pi^{inf} \rfloor\,,
\end{equation}
then test dual feasibility exactly in integer arithmetic, i.e., whether $\sum_{i \in \cycle} \pi^{int}_i \leq M$ holds for all $\cycle \in \Omega'$.
If the solution is infeasible, we reduce the scaling factor by a factor of ten, i.e., we set $M \leftarrow M / 10$, and recompute \eqref{equ:piint}.
This is repeated until an exactly dual feasible integer solution is found, or $M < 1$, in which case the repair fails.
Because the partitioning duals~$\pi_i$ are unrestricted in sign (see~\eqref{prob:drmp}), the repair only needs to enforce the cycle constraints. When the triangle inequality is exploited via covering constraints for the nodes in~$V_{trieq}$ (\autoref{sec:trieq}), the corresponding duals must additionally be nonnegative; we therefore change any negative covering dual to zero before scaling.

With this solution, the dynamic programming procedure in \autoref{sec:labeling} can be run using integer arithmetic.
If no negative reduced cost columns are found then one can conclude that the dual bound $\sum_{i \in V} \pi^{int}_i/M$ obtained from the repaired dual solution is numerically safe.
The procedure is summarized in \autoref{alg:dual-repair}.

\begin{algorithm}[htbp]
\caption{Dual Repair for Numerically Safe Bounds}\label{alg:dual-repair}
\KwIn{Floating-point dual solution $\pi^{inf}$, generated columns $\Omega'$}
\KwOut{Integer dual solution $\pi^{int}$, scaling factor $M$}
$M \leftarrow 10^{15}$\;
\Repeat{dual feasible}{
    $\pi^{int}_i \leftarrow \lfloor M \cdot \pi^{inf}_i \rfloor$ for all $i \in V$\;
    dual feasible $\leftarrow$ \textbf{true}\;
    \ForEach{$\cycle \in \Omega'$}{
        \If{$\sum_{i \in \cycle} \pi^{int}_i > M$}{
            dual feasible $\leftarrow$ \textbf{false}\;
            \textbf{break}\;
        }
    }
    \If{not dual feasible}{
        $M \leftarrow M / 10$\;
        \lIf{$M < 1$}{\textbf{fail}: duals not feasible}
    }
}
\Return{$\pi^{int}$, $M$}
\end{algorithm}
Since this procedure progressively decreases the sum of the values of the dual solution, the safe dual bound is potentially worse than the unsafe dual bound.
However, since the objective value is always integral, this would only lead to a different bound if the safe dual solution value crosses the integer boundary, i.e., if $\lceil ( \sum_{i \in V} \pi_{i}^{int} ) / M \rceil < \lceil \sum_{i \in V} \pi_{i}^{inf} \rceil $.

It is worth mentioning that the cuts described in~\autoref{sec:cuts} are naturally numerically safe, since they are based solely on combinatorial arguments that are not affected by floating-point errors.
Computing the cut violation of the current (RMP) solution $\lambda$ in floating-point arithmetic can lead to incorrect determination whather a cut is violated.
This may have the effect to not include violated cuts or include satisfied cuts and potentially affects performance, but does not compromise the correctness of the final result.
Adding cuts leads to now dual variables in (DRMP).
The exact dual feasibility check need not be modified to account for the new cut duals.
Since the duals only appear with nonnegative coefficients in the dual constraints, and they have nonnegative values, a dual feasible solution can also be constructed from one that has positive values for the cut duals by setting these duals to zero.

Note that the difference between the safe and unsafe bound can have several effects.
On the one hand, the safe bound may be weaker than the exact dual bound. If the LP relaxation at a node is integral and the unsafe bound already certifies optimality, a safe value that rounds up to one less than this integer may fail to prove optimality and force a branching.
On the other hand, the unsafe bound may be invalidly strong. Since the dual solution is only feasible within tolerance, the unsafe value can slightly exceed the true optimum: a safe value of $7.9999$ correctly rounds up to~$8$, whereas an unsafe value of $8.0001$ would round up to~$9$, which is precisely the error that the exact repair prevents.
In our experiments neither situation occurred: on every instance where both the safe and the unsafe configuration converge, the rounded safe and unsafe bounds agree (see~\autoref{sec:numsafety}), so ensuring numerical safety never changed a certified bound.

\section{The Branch-and-Price Algorithm}\label{sec:bnp}
Finally, let us discuss different aspects of how we extend the column generation method described in \autoref{sec:labeling} into an exact branch-and-price algorithm.

\ourparagraph{Edge Branching}
By default, we use a standard branching strategy based on implicit edge variables as described in~\cite{costa_exact_2019}.
For a fractional LP solution, we define implicit edge variables $x_{ij} = \sum_{\cycle \in \Omega': \{i,j\} \in \cycle} \lambda_\cycle$ representing the total usage of edge $\{i,j\}$ across all cycles.
We branch on the most used fractional edge variable that does not belong to a singleton cycle.
The rationale is that the most used edge would create the biggest disturbance in the subproblem, leading to an early fathoming due to infeasibility or finding a feasible integer solution.

In the down branch ($x_{ij} = 0$), we add edge $\{i,j\}$ to a set of \emph{deleted edges} and fix all cycle variables to zero that contain this edge.
In the up branch ($x_{ij} = 1$), we add edge $\{i,j\}$ to a set of \emph{fixed edges} and fix all cycle variables to zero that contain node $i$ or $j$ but do not contain edge $\{i,j\}$.
The labeling algorithm must also respect these branching decisions.
For deleted edges, we simply skip any extension along a deleted edge during label extension, and reject label merges in bidirectional search if the connecting edge is deleted.
For fixed edges, we automatically extend labels along fixed edges: when a label reaches a node $v$ with a fixed edge $\{v, w\}$, we immediately extend to $w$ without considering other neighbors.
This ensures that all generated cycles respect the branching constraints while avoiding redundant exploration.

\ourparagraph{Ryan-Foster Branching}
Ryan-Foster~\cite{ryan-foster} is an alternative branching technique for set partitioning master problems that branches on pairs of nodes rather than edges.
For a pair of nodes $i,j \in V$, we compute an implicit \emph{pair} variable $p_{ij} = \sum_{\cycle \in \Omega_{ij}} \lambda_\cycle^{*}$, where $\Omega_{ij}$ is the set of cycles containing both $i$ and $j$.
Given a fractional pair variable $p_{ij}$, we branch to eliminate this fractionality.

We maintain sets $M_i^n, C_i^n$ for each node $i \in V$ at branch-and-bound node $n$, representing nodes that \emph{must} and \emph{cannot} appear together with $i$ in the same cycle, respectively.
Let $n'$ denote the parent of node $n$:
\begin{itemize}
    \item To enforce $p_{ij} = 0$ (nodes in different cycles): fix all cycles containing both $i$ and $j$ to zero, and set $C_i^n \leftarrow C_i^{n'} \cup \{j\}$, $C_j^n \leftarrow C_j^{n'} \cup \{i\}$.
    \item To enforce $p_{ij} = 1$ (nodes in same cycle): fix all cycles containing exactly one of $i$ or $j$ to zero, and set $M_i^n \leftarrow M_i^{n'} \cup \{j\}$, $M_j^n \leftarrow M_j^{n'} \cup \{i\}$.
\end{itemize}

The labeling algorithm must be modified to respect these branching constraints:
\begin{itemize}
    \item We add resources $m(\lbl)$ and $c(\lbl)$ to each label, representing nodes that must and cannot be visited, respectively.
    \item When extending label $\lbl$ to node $v$: $m(\lbl^+) = m(\lbl) \cup M_v$ and $c(\lbl^+) = c(\lbl) \cup C_v$.
    \item During extension we never visit a forbidden node, maintaining $c(\lbl) \cap \lblnodes(\lbl) = \emptyset$; the requirement that all must-nodes are visited, $m(\lbl) \subseteq \lblnodes(\lbl)$, is checked only when the cycle is closed.
    \item Dominance requires additional conditions: $\lbl'$ dominates $\lbl$ only if additionally $m(\lbl') \subseteq m(\lbl)$ and $c(\lbl') \subseteq c(\lbl)$.
    \item In bidirectional search, merging labels $\lbl_a$ and $\lbl_b$ additionally requires: $c(\lbl_a) \cap \lblnodes(\lbl_b) = \emptyset$, $c(\lbl_b) \cap \lblnodes(\lbl_a) = \emptyset$, and $m(\lbl_a) \cup m(\lbl_b) \subseteq \lblnodes(\lbl_a) \cup \lblnodes(\lbl_b)$.
\end{itemize}

\ourparagraph{Node Selection and Processing}
For branch-and-bound node selection, we employ the best-estimate rule with plunging described in \cite{achterberg_constraint_2007}.
After selecting a branch-and-bound node, we solve the LP relaxation using column generation.
In each pricing iteration, the dynamic programming algorithm from \autoref{sec:labeling} is called from each starting node $s\in V$.
As these calls are independent, they are run in parallel.

\ourparagraph{Symmetry Breaking}
Since every cycle can be generated starting from any of its nodes, we apply symmetry breaking to avoid generating duplicate cycles from different pricing problems.
For each call to the dynamic program from a fixed starting node $s$, we restrict the search to only visit nodes with critical time at least as large as $q_s$.
For nodes with equal critical time, we use the node index as a tie-breaker, only allowing nodes $j$ where either $q_j > q_s$ holds or $q_j = q_s$ and $j > s$ hold.

This ensures that each cycle is generated exactly once: from the starting node with the smallest critical time (and smallest index among ties).
Moreover, ordering by critical time rather than index balances the computational effort across pricing problems.
The problems with smaller critical time~$q_s$ consider more nodes, but prune labels more agressively due to the length constraint.
The problems with larger~$q_s$ prune labels less agressively, but in turn these pricing problems admit fewer nodes to start with, namely only those with critical time at least~$q_s$.

When branching decisions fix an edge $(s, j)$ to be in the solution, we can skip the entire pricing problem starting from $s$ if $j$ has smaller critical time (or equal critical time but smaller index), since any cycle using this edge would be found by the pricing problem starting from $j$.
Similarly, when using Ryan-Foster branching, if $M_s$ contains a node with smaller critical time (or equal critical time but smaller index), the pricing problem starting from $s$ can be skipped.

\ourparagraph{Pricing}
We start each pricing call with the greedy heuristic described in~\autoref{greedy-heur}, returning any negative reduced cost columns found.
If this fails to find a cycle with negative reduced cost, then a relaxed version of the pricing problem is run with limited neighborhoods that only include the closest node and the node itself (as described in~\autoref{ng-path}).
Whenever we get non-elementary cycles, we increase the neighborhood of each node to include nodes that appear together with that node in a subcycle.
From this relaxation we always compute a Lagrangian lower bound~\cite{cg-primer}.
Let~$z^*_s$ be the optimal objective value of the pricing problem for a fixed starting node~$s$, and let~$z^*_{MP}$, $z^*_{RMP}$ be the optimal objective values of the master problem~MP, i.e., the LP relaxation of~\eqref{prob:imp}, and its restriction~RMP, respectively, then $z^*_{MP} \geq z^*_{RMP} + \sum_{s \in V} \min\{0, z^*_s\} =: LB_{lg}$.{}
From integrality of the objective function, we can use $\lceil LB_{lg} \rceil$ as a valid lower bound.
Whenever no negative reduced cost cycle is found in this relaxation, we exit pricing and declare the LP as solved to optimality.
The overall column generation procedure is summarized in \autoref{alg:cg-loop}.
To improve convergence of the column generation loop, we employ dual stabilization using smoothed duals~$\tilde{\pi} = \alpha \pi + (1-\alpha) \pi^{prev}$, where~$\pi^{prev}$ is the dual solution from the previous iteration and~$\alpha \in (0,1]$ is a smoothing parameter.

\begin{algorithm}[htbp]
\caption{Column Generation Loop}\label{alg:cg-loop}
\KwIn{RMP with initial columns $\Omega'$, upper bound $UB$}
\KwOut{Optimal LP solution or lower bound}
Initialize neighborhoods $\mathcal{N}(i) \leftarrow \{i, j^*\}$ for all $i \in V$\;
\Repeat{no negative reduced cost cycles found}{
    Solve RMP to obtain primal-dual pair $(\lambda, \pi)$\;
    \If{numerically safe}{
        $(\pi, M) \leftarrow$ \textsc{RepairDuals}$(\pi, \Omega')$\tcp*[r]{\autoref{alg:dual-repair}}
        Perform the subsequent bound computations in exact integer arithmetic\;
    }
    Run greedy heuristic on cycles in $\Omega'$\;
    \If{improving cycles found}{
        Add cycles to $\Omega'$\;
        \textbf{continue}\;
    }
    \Repeat{all cycles elementary}{
        Run relaxed pricing with neighborhoods $\mathcal{N}$\;
        Compute Lagrangian bound $LB_{lg} \leftarrow z^*_{RMP} + \sum_{s \in V} \min\{0, z^*_s\}$\;
        \lIf{$\lceil LB_{lg} \rceil \geq UB$}{\Return{$\lceil LB_{lg} \rceil$}}
        \ForEach{non-elementary cycle with subcycle $S$}{
            $\mathcal{N}(s_j) \leftarrow \mathcal{N}(s_j) \cup S$ for all $s_j \in S$\;
        }
    }
    \If{negative reduced cost cycles found}{
        Add cycles to $\Omega'$\;
    }
}
\Return{optimal LP solution $(\lambda, \pi)$}
\end{algorithm}

\ourparagraph{Early Branching}
Another acceleration idea based on integrality, inspired by the technique in~\cite{coloring}, is to skip pricing in some branch-and-bound node $n${} by looking at the lower bound $\lceil LB_{lg} \rceil$ of the parent node.
Let $z^*_{RMP}$ be the first RMP objective value for $n$ obtained after removing columns due to branching. If~$\lceil z^*_{RMP} \rceil = \lceil LB_{lg} \rceil$, then computing $z^*_{MP}$ exactly will not improve over the lower bound of the parent node. Therefore, we can skip pricing in this subproblem and perform early branching.

\ourparagraph{Farkas Pricing and RMP Initialization}
If the RMP at a subproblem becomes infeasible due to branching and removal of columns, we use Farkas pricing \cite{Nunkesser2006,CeselliGLNS08} to generate new columns that render the RMP feasible again, or to prove that the MP is infeasible.
The root node RMP is always feasible, because we initialize it with the trivially feasible singleton cycles and the cycles in the primal solution generated by the Most-Critical-Vertex-Based Heuristic (MCV) from~\cite{lccp}.

\ourparagraph{Heuristic Branch-and-Price}
The exactness of the branch-and-price algorithm relies on solving each pricing problem to proven optimality.
When relaxing this rule, we obtain a fast but inexact variant, which we use to compute strong primal solutions quickly.
One possibility to solve the pricing problem heuristically is to relax the checks for dominance during labeling, thereby possibly pruning optimal LCPCPP solutions by dominance.
Concretely, we drop the visited-set subset condition and the cut-resource conditions of \autoref{sec:cuts} required for exact dominance and say that a label~$\lbl'$ (heuristically) dominates~$\lbl$ as soon as $\redcost(\lbl') \leq \redcost(\lbl)$, $t(\lbl') \leq t(\lbl)$ and $q(\lbl') \geq q(\lbl)$.
This discards far more labels and substantially accelerates pricing; in addition, the ng neighborhood-expansion step of \autoref{ng-path} is skipped.
Because pricing is no longer exact, the minimum reduced cost may be missed and the resulting Lagrangian bound is not valid.
However, it can act as a strong primal heuristic to initialize an exact branch-and-price solve.

\section{Computational Results}\label{sec:results}
We implemented the branch-and-price algorithm described in \autoref{sec:bnp} using the
Rust wrapper of the branch-and-price framework
SCIP~\cite{russcip,BestuzhevaBesanconEtal2023_Enabling,scip9}.
The dynamic programming method from \autoref{sec:labeling} is called from each starting node in parallel using the Rayon library~\cite{rayon_rs}.
The rest of SCIP is sequential.
All experiments were run with a time limit of two hours on a cluster of identical machines, each equipped
with two Intel(R) Xeon(R) Gold 6338 processors running at 2.0\,GHz (up to 3.2\,GHz), with 32~cores per socket for a total of 64~cores and 128~threads per node, and 1\,TB of RAM.
Each run had exclusive access to a node, and both our method and the \textsc{bnc-sec} baseline used up to 16~threads: our pricing through explicit parallelization and Gurobi through its default threading.
We use the standard benchmark set from~\cite{lccp}, which consists of 84~instances with 14 to 100~nodes.

Our experimental study is guided by the following questions:
\begin{enumerate}
    \item How do the exact and heuristic branch-and-price variants compare against the branch-and-cut method of~\cite{lccp}, both in terms of primal solution quality and dual bounds?
    \item Which algorithmic components contribute the most to performance?
    \item What is the cost and benefit of ensuring numerical safety?
    \item Do cutting planes help to improve the branch-price-and-cut algorithm?
\end{enumerate}
In order to answer these questions, in \autoref{sec:overall}, we first evaluate overall performance, comparing the primal and dual sides of our approach against the branch-and-cut baseline.
The best solutions found by the heuristic configurations in \autoref{sec:overall} serve as initial upper bounds for all subsequent experiments, in order to focus on the dual task of proving optimality.
In \autoref{sec:ablation}, we analyze the impact of the algorithmic components, isolating the contribution of each through an ablation study and comparing alternative design choices.
Our numerically safe implementation is compared against an unsafe floating-point version in \autoref{sec:numsafety}.
Finally, in \autoref{sec:bpc} we investigate the effect of adding cutting planes.

Unless stated otherwise, all experiments use edge branching, see \autoref{sec:bnp}, and the numerically safe implementation described in \autoref{sec:numerical}.
We report the shifted geometric mean (SGM) of solving times with a shift of 1~second, including time outs at the time limit.
Within each table, the SGM is taken over the set of instances solved by at least one configuration listed in that table; consequently, absolute SGM values are not directly comparable across tables.

\subsection{Overall Performance}\label{sec:overall}
We compare the primal and dual performance of the following configurations:
\begin{itemize}
    \item \textsc{bnc-sec}: branch and cut with subtour elimination constraints from~\cite{lccp} based on Gurobi~13.0.1~\cite{gurobi}
    \item \textsc{bnp-mcv}: branch and price with bidirectional labeling (\autoref{sec:bidir}), parallelization, symmetry breaking, and early branching (\autoref{sec:bnp}), and greedy pricing (\autoref{greedy-heur}), using only the \textsc{MCV} heuristic solutions from~\cite{lccp} as initial upper bounds
    \item \textsc{heurbnp}: branch and price with the heuristic pricing variant of \autoref{sec:bnp}, which relaxes the dominance rule to trade exactness for speed
    \item \textsc{heurbnp-rf}: same as \textsc{heurbnp}, but with Ryan-Foster branching
    \item \textsc{bnp-full}: same as \textsc{bnp-mcv}, but initialized with the best known solution across all heuristic configurations as initial upper bound
    \item \textsc{bnc-full}: same as \textsc{bnc-sec}, but warm-started with the same best known solution as \textsc{bnp-full}
\end{itemize}
The starting point for all methods is the greedy \textsc{MCV} heuristic from~\cite{lccp}, which provides an initial feasible solution for each instance.
The finally best known solution (BKS) for each instance is the best primal bound found by any of the heuristic configurations (\textsc{MCV}, \textsc{heurbnp}, \textsc{heurbnp-rf}).  \textsc{bnp-full} is initialized with this BKS as the initial primal  bound.
Tab.~\ref{tab:overall} reports aggregated statistics on primal and dual performance of the five methods.

\begin{table}[!htbp]
    \centering
\small
\begin{tabular}{l rrrr rrr}
    \toprule
    & \multicolumn{4}{c}{Primal} & \multicolumn{3}{c}{Dual} \\
    \cmidrule(lr){2-5} \cmidrule(lr){6-8}
    Setting & Sol Impr & Avg Impr (\%) & T/O & Gap to BKS (\%) & Solved & Gap (\%) & Time [s] \\
    \midrule
    \textsc{bnc-sec}     & 23 & 13.5 & 47 &  5.8 & 37 & 30.3 & 147.2 \\
    \textsc{bnp-mcv}     & 26 & 14.0 & 38 &  4.9 & 46 &  0.4 & 19.8 \\
    \textsc{heurbnp}     & 41 & 13.7 & 22 &  2.1 & --- & --- & 6.7 \\
    \textsc{heurbnp-rf}  & \textbf{47} & \textbf{14.4} & \textbf{14} & \textbf{0.2} & --- & --- & 5.6 \\
    \midrule
    \textsc{bnc-full}    & --- & --- & --- & --- & 40 & 27.8 & 109.4 \\
    \textsc{bnp-full}    & --- & --- & --- & --- & \textbf{51} & \textbf{0.4} & \textbf{10.0} \\
    \bottomrule
\end{tabular}

    \caption{Overall performance. Left: primal performance relative to the greedy \textsc{MCV} heuristic baseline. Right: dual performance. Sol Impr: instances with a better primal bound than the greedy baseline; Avg Impr: average percentage improvement on improved instances; T/O: timeouts; Gap to BKS: average gap to the best known solution; Solved: instances solved to optimality; Gap: average optimality gap; Time: shifted geometric mean of solving times (shift of 1s) over the instances solved to optimality by at least one of the exact configurations.}
    \label{tab:overall}
\end{table}

\textsc{bnc-sec} is able to improve the primal baseline given by the solution of the greedy \textsc{MCV} heuristic on 23~instances and \textsc{bnp-mcv} on 26~instances.
In comparison, the heuristic branch-and-price method \textsc{heurbnp} proves particularly effective: by using a relaxed pricing algorithm that trades exactness for speed, it improves solutions on 41~instances with an average gap to the best known solution of only 2.1\%.
Adding Ryan-Foster branching (\textsc{heurbnp-rf}) improves solutions on 47~instances, six more than \textsc{heurbnp}, and lowers the average gap to the best known solution to 0.2\%.
Both heuristic variants are moreover fast: their shifted geometric mean running times over the instances solved by an exact configuration are 6.7~seconds (\textsc{heurbnp}) and 5.6~seconds (\textsc{heurbnp-rf}), below those of the exact methods, since they forgo proving optimality.
Notably, neither heuristic dominates the other: \textsc{heurbnp} finds a strictly better solution than all other configurations on 1~instance, while \textsc{heurbnp-rf} does so on 8~instances, which motivates initializing \textsc{bnp-full} with the best solution found across all heuristic configurations.

On the dual side, \textsc{bnp-mcv} solves 46~instances to optimality compared to 37~for \textsc{bnc-sec}.
This amounts to nine additional instances, and the branch-and-price variant \textsc{bnp-mcv} is moreover over 7~times faster than \textsc{bnc-sec} in terms of shifted geometric mean.
Since both are initialized with the same \textsc{MCV} solution, this is a fair comparison.
It is moreover conservative with respect to the solver: our branch-and-price runs on the academic solver SCIP, whereas \textsc{bnc-sec} uses the commercial solver Gurobi, which displays substantially faster performance on general mixed-integer programs.

The relaxed pricing in \textsc{heurbnp} also achieves the same (rounded) root dual bound as exact column generation on every instance where the latter converges.
However, note that this heuristic ``dual bound'' comes with no formal guarantee of exactness.
When the best known solutions from the heuristic configurations are fed back as initial upper bounds, \textsc{bnp-full} solves 51~instances to optimality, 14~more than \textsc{bnc-sec}, and is 14.7~times faster.
To control for the stronger initial upper bound available to \textsc{bnp-full}, we also run the branch-and-cut baseline warm-started with the same best known solution, denoted \textsc{bnc-full}. Even with this advantage, \textsc{bnc-full} solves 40~instances, still 11~fewer than \textsc{bnp-full}, and remains over 10~times slower in shifted geometric mean.
The largest instance solved by \textsc{bnp-full} features 76~nodes compared to 52~nodes for the branch-and-cut baseline.

To summarize, both primal and dual performance are significantly improved using branch and price.
In the following experiments, we use \textsc{bnp-full} as the baseline configuration.

\subsection{Analysis of Algorithmic Components}\label{sec:ablation}
Our best version \textsc{bnp-full} combines the following techniques: bidirectional labeling (\autoref{sec:bidir}), parallelization, symmetry breaking, and early branching (\autoref{sec:bnp}), greedy pricing (\autoref{greedy-heur}), and improved initial upper bounds from the heuristic branch-and-price variants (\autoref{sec:bnp}).{}
In order to quantify the performance impact of each component, we compare \textsc{bnp-full} against modified versions where one technique is disabled at a time:
\begin{itemize}
    \item \textsc{nosymbr}: without symmetry breaking
    \item \textsc{nobidir}: with mono-directional labeling
    \item \textsc{nopar}: without parallel pricing
    \item \textsc{noearly}: without early branching
    \item \textsc{nogreedy}: without greedy pricing
    \item \textsc{nobestsol}: without improved initial upper bounds
    \item \textsc{withtrieq}: exploiting triangle inequality (\autoref{sec:trieq})
\end{itemize}
Tab.~\ref{tab:ablation} reports the aggregated results including statistics on the root performance and reports the ``virtual best'' performance over all methods that could be achieved if we could choose the best method for each instance with perfect foresight.
In the following, we first examine the quality of the root LP relaxation, which explains the overall behavior of the algorithm, then quantify the impact of the individual acceleration techniques, before taking a closer look at the pricing process and at three alternative design choices.

\begin{table}[!htbp]
    \centering
\small
\begin{tabular}{lrrrrrr}
    \toprule
    Setting & Solved & At Root & Root LP & Time [s] & Ratio & Gap (\%) \\
    \midrule
    \textsc{bnp-full}  & \textbf{51} & \textbf{49} & \textbf{52} & \textbf{7.6} & \textbf{1.00} & \textbf{0.2} \\
    \midrule
    \textsc{nosymbr}   & 46 & 44 & 46 & 27.3 & 3.62 & 0.2 \\
    \textsc{nobidir}   & 47 & 46 & 47 & 24.2 & 3.20 & 0.2 \\
    \textsc{nopar}     & 49 & 47 & 49 & 16.3 & 2.16 & 0.2 \\
    \textsc{noearly}   & 51 & 49 & 52 &  7.4 & 0.98 & 0.2 \\
    \textsc{nogreedy}  & 51 & 49 & 52 &  8.5 & 1.13 & 1.9 \\
    \textsc{nobestsol} & 46 & 35 & 51 & 15.6 & 2.07 & 1.4 \\
    \textsc{withtrieq} & 49 & 47 & 52 &  9.7 & 1.28 & 0.7 \\
    \midrule
    Virtual Best       & 52 & --- & --- &  6.1 & 0.81 & --- \\
    \bottomrule
\end{tabular}

    \caption{Ablation study. Solved: instances solved to optimality; At Root: solved at root node; Root LP: root LP relaxation converged; Time: shifted geometric mean of solving times (shift of 1s) over instances solved by at least one configuration; Ratio: time relative to \textsc{bnp-full}; Gap: average root gap over the instances solved by at least one configuration.}
    \label{tab:ablation}
\end{table}

\ourparagraph{LP Relaxations}
The column generation loop of \textsc{bnp-full} converges to an optimal root LP solution for 52~instances, running into the memory or time limit in the remaining cases.
The key advantage of the set partitioning formulation is evident in the tightness of its LP relaxation: 49~instances are solved at the root node without branching.
Comparing the root dual bounds on the 52~instances where \textsc{bnp-full} converges, and measuring both against the same optimal objective, the set partitioning formulation attains an average gap of only 0.4\%, compared to 30.3\% for the SEC formulation of~\cite{lccp}.
On 44 of these instances, \textsc{bnp-full} produces a strictly better root dual bound than \textsc{bnc-sec}.
Furthermore, on all 51~instances solved to optimality, the root dual bound of the set partitioning formulation (rounded up due to integrality) is at least $z^* - 1$, demonstrating the tightness of the LP relaxation.
With most instances decided at the root, overall performance therefore hinges on how quickly the root LP relaxation is solved, i.e., on the efficiency of pricing, which is precisely what the following acceleration techniques target.

\ourparagraph{Impact of the Improvement Techniques}
Symmetry breaking proves to be the most impactful technique: disabling it results in a 3.6-fold slowdown and reduces the number of solved instances from 51 to 46.
Bidirectional labeling is the second most effective technique, with monodirectional labeling resulting in a 3.2~times slowdown and 4~fewer instances solved.
Parallelization provides a 2.2-fold speedup but only affects the number of solved instances marginally.
Interestingly, early branching has negligible impact on these instances, with nearly identical solving times and the same number of instances solved.
Disabling early branching (\textsc{noearly}) uniquely solves instance t84\_brazil58, which \textsc{bnp-full} does not solve within the time limit.
The virtual best of all ablation configurations solves 52~instances, one more than \textsc{bnp-full}.

\ourparagraph{Greedy Pricing Heuristic}
We next examine the two components that keep the individual pricing iterations cheap: the greedy pricing heuristic and the non-elementary relaxation.
The greedy pricing heuristic described in \autoref{greedy-heur} proves highly effective at finding negative reduced cost columns.
Across all 84~instances, the greedy heuristic succeeds in 93\% of pricing calls on average, contributing 77\% of all columns added during column generation.
The remaining 23\% of columns are found by the labeling algorithm, which is invoked only when the greedy heuristic fails to find improving columns.
Furthermore, the greedy heuristic reduces the number of neighborhood expansions in the ng-relaxation from an average of 7.5~per instance (without greedy) to 3.6~per instance (with greedy), indicating that greedy pricing often finds columns that would otherwise require expanding the ng-neighborhoods.
Despite generating more total columns, \textsc{bnp-full} with greedy pricing is faster than without, as the greedy heuristic avoids the more expensive labeling algorithm in most pricing iterations.
Disabling greedy pricing solves the same 51~instances with the same root bound quality but is about 13\% slower, so greedy pricing improves convergence speed rather than the bounds themselves.

\ourparagraph{Non-elementary Relaxation}
The ng-relaxation described in \autoref{ng-path} allows the pricing algorithm to generate non-elementary cycles with limited neighborhoods, expanding them only when necessary.
Across all instances, neighborhoods grow to at most 67\% of full size, with an average maximum of 38\%.
Notably, no instance requires neighborhoods to reach 100\%, demonstrating that the ng-relaxation effectively reduces pricing complexity while still finding optimal solutions.
On average, 7.5~neighborhood expansions are needed per instance when non-elementary cycles are detected.
We do not include a configuration that disables the ng-relaxation in the ablation study, since disabling it amounts to forcing full (elementary) neighborhoods from the start, precisely the worst case the relaxation avoids. As the neighborhoods never grow beyond 67\% of full size, this elementary variant would only ever be slower, so the relaxation is always beneficial.

\ourparagraph{Triangle Inequality}
Finally, we consider three alternative design choices (exploiting the triangle inequality, Ryan-Foster branching, and dual stabilization), each of which improves an isolated metric, yet fails to improve overall performance.
Exploiting the triangle inequality (\textsc{withtrieq}) reduces root column generation iterations from 126 to 64~on average, yet only 49~instances are solved.
This is because the set covering relaxation used for triangle inequality nodes can weaken the root dual bound: on 2 of the 52~instances where the root LP of both configurations converges, the dual bound is exactly 1~unit weaker. These are precisely the 2~instances that \textsc{withtrieq} fails to solve, and
\textsc{withtrieq} does not solve any additional instances compared to \textsc{bnp-full}.

\ourparagraph{Branching Strategy}
In addition to the above experiments, we tested the effect of switching the branching rule to Ryan-Foster branching instead of edge branching (\autoref{sec:bnp}).
To expose enough branching to compare the two rules, we run both strategies without the improved initial upper bounds; with these bounds almost all instances are solved at the root, leaving too few branchings to observe. For this reason Ryan-Foster branching is reported here separately and is intentionally omitted from the ablation table above.
Edge branching solves 46~instances (SGM 10.5s) while Ryan-Foster solves 45~(SGM 17.6s), making it 1.7~times slower.
Over the 42~instances solved by both, 15~require branching: edge branching explores 8~nodes on average (max 61), while Ryan-Foster explores 14~nodes on average (max 99).

\ourparagraph{Dual Stabilization}
Furthermore, we tested dual stabilization to reduce the oscillation of dual values during column generation.
While stabilization reduces the number of column generation iterations by 14\% at the root node (from 126 to 108~on average) and by 18\% overall, this barely affects solving times: the SGM is 6.3~seconds with stabilization versus 6.5~seconds without, and both configurations solve the same 51~instances.

\subsection{Cost of Numerical Safety}\label{sec:numsafety}

\begin{table}[htbp]
    \centering
\small
\begin{tabular}{lrrr}
    \toprule
    Setting & Solved & Time [s] & At Root \\
    \midrule
    \textsc{bnp-full} (safe)   & 51 & 7.6 & 49 \\
    \textsc{bnp-unsafe}        & 51 & \textbf{4.5} & 50 \\
    \bottomrule
\end{tabular}

    \caption{Cost of numerical safety. Solved: instances solved to optimality; Time: shifted geometric mean of solving times (shift of 1s) over instances solved by at least one method; At Root: solved at root node.}
    \label{tab:numerical-safety}
\end{table}

Tab.~\ref{tab:numerical-safety} shows the impact of numerical safety on performance.
The unsafe version \textsc{bnp-unsafe} is approximately 40\% faster than the safe version \textsc{bnp-full} (4.5s vs 7.6s shifted geometric mean) and solves the same number of instances (51): the unsafe version closes t62\_hk48, while the safe version instead closes at62\_p43.
No dual bound failures were detected: on all instances where both versions converge, the safe and unsafe dual bounds agree.
This demonstrates that numerical safety can be ensured with a moderate overhead, preserving the substantial performance gains of the branch-and-price approach, but that it may be beneficial to run the heuristic versions \textsc{heurbnp} and \textsc{heurbnp-rf} in unsafe mode.

\subsection{Branch-Price-and-Cut Performance}\label{sec:bpc}
In order to evaluate the options described in~\autoref{sec:cuts}, we compare five different branch-price-and-cut configurations:
\begin{itemize}
    \item \textsc{bnp-full}: no cutting planes (baseline)
    \item \textsc{withcc}: clique cuts without change to the pricing problem
    \item \textsc{withsrc}: subset row cuts without change to the pricing problem
    \item \textsc{withsrc+cc}: both subset row and clique cuts without change to the pricing problem
    \item \textsc{withsrc+pricing}: subset row cuts with modifications to the pricing problem
\end{itemize}
Tab.~\ref{tab:cuts} reports aggregated results including statistics on the root performance and again reports the ``virtual best'' performance over these methods.

\begin{table}[htbp]
    \centering
\small
\begin{tabular}{lrrrrr}
    \toprule
    Setting & Solved & At Root & Root LP & Time [s] & Ratio \\
    \midrule
    \textsc{bnp-full}          & 51 & 49 & 52 &  7.6 & 1.00 \\
    \textsc{withcc}            & 51 & 50 & 52 &  7.3 & 0.97 \\
    \textsc{withsrc}           & 48 & 48 & 51 &  9.6 & 1.27 \\
    \textsc{withsrc+cc}        & 49 & 49 & 52 &  9.6 & 1.27 \\
    \textsc{withsrc+pricing}   & \textbf{52} & \textbf{51} & 53 & \textbf{7.2} & \textbf{0.96} \\
    \midrule
    Virtual Best               & 52 & --- & --- & 6.0 & 0.80 \\
    \bottomrule
\end{tabular}

    \caption{Branch-price-and-cut performance. Solved: instances solved to optimality; At Root: solved at root node; Root LP: root LP relaxation converged; Time: shifted geometric mean of solving times (shift of 1s); Ratio: time relative to \textsc{bnp-full}.}
    \label{tab:cuts}
\end{table}
First, cutting planes are separated on very few instances: subset-row cuts are added on only 3~of the 84~instances (\texttt{t62\_dantzig42}, \texttt{t84\_eil51} and \texttt{t84\_eil76}, with 11~cuts in a single separation round each), and clique cuts on 2~instances (\texttt{t84\_eil51} and \texttt{t84\_eil76}, with 33 and 26~cuts over 5 and 4~rounds, respectively).
This is a direct consequence of the tight LP relaxation: most instances are solved at the root with little fractionality left to separate.
Adding cutting planes to the RMP without modifying the pricing problem generally fails to help: clique cuts alone (\textsc{withcc}) are roughly neutral, even marginally faster (a 3\% speedup), while solving the same instances, whereas subset row cuts (\textsc{withsrc}) cause a 27\% slowdown and combining both (\textsc{withsrc+cc}) also a 27\% slowdown.
Adding cuts only to the RMP can even be counterproductive: relative to \textsc{bnp-full}, \textsc{withsrc} fails to solve \texttt{t62\_att48}, \texttt{t62\_dantzig42} and \texttt{t84\_eil51} (48~solved), while \textsc{withsrc+cc} fails on \texttt{t62\_dantzig42} and \texttt{t84\_eil51} (49~solved).
The number of column generation iterations remains nearly identical across configurations on instances solved by all methods, so the slowdown is attributable to the overhead of cut separation and LP re-optimization, which is not compensated by tighter bounds.

Respecting the subset row cuts in the pricing problem (\textsc{withsrc+pricing}) reverses this effect on \texttt{t84\_eil51}: the cuts close the root node, so the instance is solved without branching (in 13.7~s, versus the 25.5~s and 15~branch-and-bound nodes needed by \textsc{bnp-full}), recovering the instance that \textsc{withsrc} loses.
By generating columns that already satisfy the cuts, this configuration solves 52~instances, one more than \textsc{bnp-full} (it additionally closes \texttt{t62\_hk48}), with a 4\% lower shifted geometric mean time (7.2~versus 7.6~seconds), the best of all cutting-plane configurations.
The virtual best over all cutting plane configurations solves 52~instances, the same as \textsc{withsrc+pricing}.

\section{Conclusion}
To summarize, our computational results show that a branch-price-and-cut approach significantly improves over the previous state-of-the-art in solving the length-constrained cycle partition problem, both in terms of primal solution quality and dual performance.
The best-performing branch-price-and-cut algorithm (\textsc{withsrc+pricing}) solves 52~instances from the standard benchmark set, compared to 40~for the previous branch-and-cut method, and scales to instances with up to 76~nodes (vs. 52 previously reported).
The main driver of this performance improvement is the set partitioning formulation: it provides substantially tighter LP relaxations, with an average root gap of 0.4\% compared to 30.3\% for the subtour elimination formulation.

As a result, most instances are solved at the root node without branching.
A natural question is whether the set-partitioning formulation satisfies the \emph{round-up property}, i.e., whether the optimal integer objective always equals the LP relaxation value rounded up, $z^* = \lceil z_{LP} \rceil$.
Our results show that this does not hold in general: on \texttt{t84\_eil51} the LP value rounds up to~8 while the integer optimum is~9, which is exactly why branching is required there.
On the other 50 of the 52~instances whose root LP converges the property does hold, and on the single open instance \texttt{t84\_eil76} it cannot be decided, as its LP rounds up to~8 while the best known solution uses 9~cycles.
This near-universal round-up behavior is nonetheless what allows most instances to be solved at the root once a matching primal solution is found.
For a study of the conditions under which the round-up property holds, see~\cite{kilian}.

Among the algorithmic components, symmetry breaking, bidirectional labeling, and improved initial upper bounds contribute the most.
The greedy pricing heuristic succeeds in 93\% of pricing calls and produces root LP bounds matching the exact algorithm, while avoiding the more expensive labeling procedure.
Subset row cuts respected in pricing are the only cutting plane configuration that improves performance, reducing the solving time without losing any instances.
When branching is needed, edge branching proves more robust than Ryan-Foster branching, producing smaller search trees.
Interestingly, techniques that reduce the number of column generation iterations, namely dual stabilization and triangle inequality exploitation, do not improve solving times, suggesting that the pricing problem is solved efficiently enough that fewer iterations do not compensate for the added overhead.

All results are based on a numerically safe combination of floating-point and integer arithmetic.
While we did not notice any incorrect results when using unsafe floating-point computation, we could confirm that numerical safe computation incurs only a moderate overhead and preserves the substantial performance gains of the branch-price-and-cut approach.

The most pressing issue for future work is the efficiency of the pricing routine: instances beyond 76~nodes remain unsolved, and pricing dominates the running time.
Closing these instances will likely require stronger dual bounds or a more scalable pricing algorithm.
Recent advances in accelerating dynamic-programming-based pricing, such as parallel labeling for the resource-constrained shortest path problem~\cite{petersen2025parallel} or GPU-accelerated state expansion~\cite{tardivo2026gpu}, offer a promising avenue.{}
The potential of further improvements, such as parallelizing the remaining sequential parts of the branch-and-bound search or separating additional cutting planes, can only be unlocked once pricing is no longer the bottleneck.

\section*{Funding}
Research reported in this paper was partially supported through the Research Campus MODAL funded by the German Federal Ministry of Education and Research (fund numbers 05M14ZAM, 05M20ZBM) and the Deutsche Forschungsgemeinschaft (DFG) through the DFG Cluster of Excellence MATH+.

\section*{Data availability}
The benchmark instances and the scripts used to generate all reported results are openly available in the \texttt{lccp-ejor} repository at \url{https://github.com/mmghannam/lccp-ejor}.

\appendix
\begin{landscape}
\section{Per-Instance Results}\label{sec:appendix}
\begin{longtable}{l r r | r r r r r r c | r r r r r c}
\caption{Per-instance results comparing \textsc{bnp-full} (branch-price-and-cut) with \textsc{bnc-sec} (branch-and-cut with subtour elimination).
BKS: best known solution; PB: primal bound; DB: dual bound; DBr: dual bound at root; Gap: optimality gap (\%); Time: solving time in seconds; Nodes: branch-and-bound nodes; Status: opt (optimal) or TL (time limit).}
\label{tab:supplementary}\\
\toprule
& & & \multicolumn{7}{c|}{\textsc{bnp-full}} & \multicolumn{6}{c}{\textsc{bnc-sec}} \\
Instance & $n$ & BKS & PB & DBr & DB & Gap & Time & Nodes & St. & PB & DB & Gap & Time & Nodes & St. \\
\midrule
\endfirsthead
\multicolumn{16}{c}{\tablename\ \thetable{} -- continued from previous page} \\
\toprule
& & & \multicolumn{7}{c|}{\textsc{bnp-full}} & \multicolumn{6}{c}{\textsc{bnc-sec}} \\
Instance & $n$ & BKS & PB & DBr & DB & Gap & Time & Nodes & St. & PB & DB & Gap & Time & Nodes & St. \\
\midrule
\endhead
\midrule
\multicolumn{16}{r}{Continued on next page} \\
\endfoot
\bottomrule
\endlastfoot
\texttt{t62\_burma14} & 14 & 5 & 5 & 5 & 5 & 0 & 0.1 & 1 & opt & 5 & 5 & 0 & 0.1 & 43 & opt \\
\texttt{t84\_burma14} & 14 & 6 & 6 & 6 & 6 & 0 & 0.1 & 1 & opt & 6 & 6 & 0 & 0.0 & 1 & opt \\
\texttt{t62\_ulysses16} & 16 & 4 & 4 & 4 & 4 & 0 & 0.1 & 1 & opt & 4 & 4 & 0 & 0.0 & 1 & opt \\
\texttt{t84\_ulysses16} & 16 & 6 & 6 & 6 & 6 & 0 & 0.1 & 1 & opt & 6 & 6 & 0 & 0.1 & 1 & opt \\
\texttt{at62\_br17} & 17 & 5 & 5 & 5 & 5 & 0 & 0.1 & 1 & opt & 5 & 5 & 0 & 0.1 & 1 & opt \\
\texttt{at84\_br17} & 17 & 6 & 6 & 6 & 6 & 0 & 0.0 & 1 & opt & 6 & 6 & 0 & 0.0 & 1 & opt \\
\texttt{t62\_gr17} & 17 & 5 & 5 & 5 & 5 & 0 & 0.1 & 1 & opt & 5 & 5 & 0 & 0.2 & 1 & opt \\
\texttt{t84\_gr17} & 17 & 8 & 8 & 8 & 8 & 0 & 0.1 & 1 & opt & 8 & 8 & 0 & 0.0 & 1 & opt \\
\texttt{t62\_gr21} & 21 & 5 & 5 & 5 & 5 & 0 & 0.1 & 1 & opt & 5 & 5 & 0 & 0.8 & 34 & opt \\
\texttt{t84\_gr21} & 21 & 8 & 8 & 8 & 8 & 0 & 0.1 & 1 & opt & 8 & 8 & 0 & 0.5 & 21 & opt \\
\texttt{t62\_ulysses22} & 22 & 5 & 5 & 5 & 5 & 0 & 0.3 & 1 & opt & 5 & 5 & 0 & 0.1 & 1 & opt \\
\texttt{t84\_ulysses22} & 22 & 7 & 7 & 7 & 7 & 0 & 0.1 & 1 & opt & 7 & 7 & 0 & 0.7 & 67 & opt \\
\texttt{t62\_gr24} & 24 & 5 & 5 & 5 & 5 & 0 & 0.5 & 1 & opt & 5 & 5 & 0 & 1.1 & 25 & opt \\
\texttt{t84\_gr24} & 24 & 7 & 7 & 7 & 7 & 0 & 0.1 & 1 & opt & 7 & 7 & 0 & 0.2 & 1 & opt \\
\texttt{t62\_fri26} & 26 & 6 & 6 & 6 & 6 & 0 & 1.0 & 1 & opt & 6 & 6 & 0 & 1.9 & 158 & opt \\
\texttt{t84\_fri26} & 26 & 8 & 8 & 8 & 8 & 0 & 0.1 & 1 & opt & 8 & 8 & 0 & 1.1 & 154 & opt \\
\texttt{t62\_bayg29} & 29 & 5 & 5 & 5 & 5 & 0 & 1.1 & 1 & opt & 5 & 5 & 0 & 12.7 & 8494 & opt \\
\texttt{t62\_bays29} & 29 & 6 & 6 & 6 & 6 & 0 & 0.6 & 1 & opt & 6 & 6 & 0 & 3.4 & 814 & opt \\
\texttt{t84\_bayg29} & 29 & 8 & 8 & 8 & 8 & 0 & 0.1 & 1 & opt & 8 & 8 & 0 & 17.1 & 5969 & opt \\
\texttt{t84\_bays29} & 29 & 8 & 8 & 8 & 8 & 0 & 0.1 & 1 & opt & 8 & 8 & 0 & 7.4 & 1018 & opt \\
\texttt{at62\_ftv33} & 33 & 7 & 7 & 7 & 7 & 0 & 1.2 & 1 & opt & 7 & 7 & 0 & 3113.6 & 3353943 & opt \\
\texttt{at84\_ftv33} & 33 & 9 & 9 & 9 & 9 & 0 & 0.1 & 1 & opt & 9 & 9 & 0 & 13.0 & 403 & opt \\
\texttt{at62\_ftv35} & 35 & 6 & 6 & 6 & 6 & 0 & 3.8 & 1 & opt & 6 & 6 & 0 & 3285.3 & 14828814 & opt \\
\texttt{at84\_ftv35} & 35 & 9 & 9 & 9 & 9 & 0 & 0.3 & 1 & opt & 9 & 9 & 0 & 165.6 & 26250 & opt \\
\texttt{at62\_ftv38} & 38 & 6 & 6 & 6 & 6 & 0 & 9.6 & 1 & opt & 6 & 5 & 16.7 & TL & 36061643 & TL \\
\texttt{at84\_ftv38} & 38 & 9 & 9 & 9 & 9 & 0 & 0.7 & 1 & opt & 9 & 9 & 0 & 546.9 & 278079 & opt \\
\texttt{t62\_dantzig42} & 42 & 6 & 6 & 6 & 6 & 0 & 113.4 & 41 & opt & 6 & 6 & 0 & 1024.9 & 2590519 & opt \\
\texttt{t62\_swiss42} & 42 & 6 & 6 & 6 & 6 & 0 & 87.5 & 1 & opt & 6 & 6 & 0 & 1630.6 & 3776539 & opt \\
\texttt{t84\_dantzig42} & 42 & 9 & 9 & 9 & 9 & 0 & 0.9 & 1 & opt & 9 & 9 & 0 & 1364.0 & 253146 & opt \\
\texttt{t84\_swiss42} & 42 & 9 & 9 & 9 & 9 & 0 & 0.9 & 1 & opt & 9 & 9 & 0 & 233.1 & 53847 & opt \\
\texttt{at62\_p43} & 43 & 1 & 1 & 1 & 1 & 0 & 297.3 & 1 & opt & 1 & 1 & 0 & 0.1 & 0 & opt \\
\texttt{at84\_p43} & 43 & 2 & 2 & 2 & 2 & 0 & 3.0 & 1 & opt & 2 & 2 & 0 & 0.1 & 1 & opt \\
\texttt{at62\_ftv44} & 44 & 6 & 6 & 6 & 6 & 0 & 77.4 & 1 & opt & 7 & 5 & 28.6 & TL & 10118677 & TL \\
\texttt{at84\_ftv44} & 44 & 9 & 9 & 9 & 9 & 0 & 0.7 & 1 & opt & 9 & 9 & 0 & 6617.2 & 1007542 & opt \\
\texttt{at62\_ftv47} & 47 & 6 & 6 & 6 & 6 & 0 & 215.6 & 1 & opt & 7 & 6 & 14.3 & TL & 6819795 & TL \\
\texttt{at84\_ftv47} & 47 & 8 & 8 & 8 & 8 & 0 & 1.2 & 1 & opt & 8 & 8 & 0 & 5322.9 & 6130397 & opt \\
\texttt{at62\_ry48p} & 48 & 6 & - & - & - & 100.0 & TL & - & TL & 6 & 5 & 16.7 & TL & 11727846 & TL \\
\texttt{at84\_ry48p} & 48 & 8 & 8 & 8 & 8 & 0 & 7.6 & 1 & opt & 9 & 8 & 11.1 & TL & 6659592 & TL \\
\texttt{t62\_att48} & 48 & 6 & 6 & 6 & 6 & 0 & 4599.6 & 1 & opt & 6 & 6 & 0 & 1096.1 & 2278842 & opt \\
\texttt{t62\_gr48} & 48 & 6 & 6 & 6 & 6 & 0 & 98.2 & 1 & opt & 7 & 5 & 28.6 & TL & 14151852 & TL \\
\texttt{t62\_hk48} & 48 & 6 & - & - & - & 100.0 & TL & - & TL & 6 & 6 & 0 & 6026.9 & 20699967 & opt \\
\texttt{t84\_att48} & 48 & 8 & 8 & 8 & 8 & 0 & 26.7 & 1 & opt & 8 & 8 & 0 & 1010.4 & 186278 & opt \\
\texttt{t84\_gr48} & 48 & 8 & 8 & 8 & 8 & 0 & 5.6 & 1 & opt & 9 & 8 & 11.1 & TL & 9427085 & TL \\
\texttt{t84\_hk48} & 48 & 9 & 9 & 9 & 9 & 0 & 3.6 & 1 & opt & 9 & 9 & 0 & 1904.9 & 540077 & opt \\
\texttt{t62\_eil51} & 51 & 6 & 6 & 6 & 6 & 0 & 947.2 & 1 & opt & 7 & 5 & 28.6 & TL & 13281898 & TL \\
\texttt{t84\_eil51} & 51 & 9 & 9 & 8 & 9 & 0 & 25.5 & 15 & opt & 9 & 8 & 11.1 & TL & 9935924 & TL \\
\texttt{t62\_berlin52} & 52 & 6 & - & - & - & 100.0 & TL & - & TL & 7 & 6 & 14.3 & TL & 14358987 & TL \\
\texttt{t84\_berlin52} & 52 & 9 & - & - & - & 100.0 & TL & - & TL & 9 & 8 & 11.1 & TL & 5048215 & TL \\
\texttt{at62\_ft53} & 53 & 5 & - & - & - & 100.0 & TL & - & TL & 7 & 4 & 42.9 & TL & 8070726 & TL \\
\texttt{at84\_ft53} & 53 & 8 & 8 & 8 & 8 & 0 & 47.4 & 1 & opt & 10 & 6 & 40.0 & TL & 2166224 & TL \\
\texttt{at62\_ftv55} & 55 & 6 & 6 & 6 & 6 & 0 & 448.8 & 1 & opt & 7 & 5 & 28.6 & TL & 7753333 & TL \\
\texttt{at84\_ftv55} & 55 & 9 & 9 & 9 & 9 & 0 & 9.2 & 1 & opt & 11 & 7 & 36.4 & TL & 648741 & TL \\
\texttt{t62\_brazil58} & 58 & 5 & - & - & - & 100.0 & TL & - & TL & 6 & 5 & 16.7 & TL & 15534631 & TL \\
\texttt{t84\_brazil58} & 58 & 8 & - & - & - & 100.0 & TL & - & TL & 8 & 7 & 12.5 & TL & 8560266 & TL \\
\texttt{at62\_ftv64} & 64 & 7 & - & - & - & 100.0 & TL & - & TL & 8 & 5 & 37.5 & TL & 2958768 & TL \\
\texttt{at84\_ftv64} & 64 & 10 & 10 & 10 & 10 & 0 & 26.2 & 1 & opt & 11 & 7 & 36.4 & TL & 358873 & TL \\
\texttt{at62\_ft70} & 70 & 6 & - & - & - & 100.0 & TL & - & TL & 6 & 4 & 33.3 & TL & 4094939 & TL \\
\texttt{at62\_ftv70} & 70 & 9 & - & - & - & 100.0 & TL & - & TL & 9 & 5 & 44.4 & TL & 2445597 & TL \\
\texttt{at84\_ft70} & 70 & 9 & - & - & - & 100.0 & TL & - & TL & 9 & 6 & 33.3 & TL & 1727481 & TL \\
\texttt{at84\_ftv70} & 70 & 9 & 9 & 9 & 9 & 0 & 177.6 & 1 & opt & 11 & 7 & 36.4 & TL & 872838 & TL \\
\texttt{t62\_st70} & 70 & 6 & - & - & - & 100.0 & TL & - & TL & 7 & 5 & 28.6 & TL & 6432455 & TL \\
\texttt{t84\_st70} & 70 & 9 & 9 & 9 & 9 & 0 & 76.2 & 1 & opt & 9 & 7 & 22.2 & TL & 401218 & TL \\
\texttt{t62\_eil76} & 76 & 6 & - & - & - & 100.0 & TL & - & TL & 7 & 5 & 28.6 & TL & 3945851 & TL \\
\texttt{t62\_pr76} & 76 & 6 & - & - & - & 100.0 & TL & - & TL & 7 & 5 & 28.6 & TL & 3701310 & TL \\
\texttt{t84\_eil76} & 76 & 9 & 9 & 8 & 8 & 11.1 & TL & 5 & TL & 10 & 7 & 30.0 & TL & 730776 & TL \\
\texttt{t84\_pr76} & 76 & 9 & 9 & 9 & 9 & 0 & 1412.8 & 1 & opt & 10 & 7 & 30.0 & TL & 137363 & TL \\
\texttt{t62\_gr96} & 96 & 7 & - & - & - & 100.0 & TL & - & TL & 7 & 5 & 28.6 & TL & 1917562 & TL \\
\texttt{t84\_gr96} & 96 & 9 & - & - & - & 100.0 & TL & - & TL & 10 & 6 & 40.0 & TL & 59244 & TL \\
\texttt{t62\_rat99} & 99 & 7 & - & - & - & 100.0 & TL & - & TL & 7 & 5 & 28.6 & TL & 1430164 & TL \\
\texttt{t84\_rat99} & 99 & 10 & - & - & - & 100.0 & TL & - & TL & 10 & 7 & 30.0 & TL & 146129 & TL \\
\texttt{t62\_kroA100} & 100 & 8 & - & - & - & 100.0 & TL & - & TL & 8 & 5 & 37.5 & TL & 426410 & TL \\
\texttt{t62\_kroB100} & 100 & 8 & - & - & - & 100.0 & TL & - & TL & 8 & 5 & 37.5 & TL & 618315 & TL \\
\texttt{t62\_kroC100} & 100 & 7 & - & - & - & 100.0 & TL & - & TL & 7 & 5 & 28.6 & TL & 461929 & TL \\
\texttt{t62\_kroD100} & 100 & 7 & - & - & - & 100.0 & TL & - & TL & 7 & 5 & 28.6 & TL & 1541628 & TL \\
\texttt{t62\_kroE100} & 100 & 7 & - & - & - & 100.0 & TL & - & TL & 7 & 5 & 28.6 & TL & 1361385 & TL \\
\texttt{t62\_rd100} & 100 & 7 & - & - & - & 100.0 & TL & - & TL & 7 & 5 & 28.6 & TL & 369576 & TL \\
\texttt{t84\_kroA100} & 100 & 9 & - & - & - & 100.0 & TL & - & TL & 10 & 6 & 40.0 & TL & 16871 & TL \\
\texttt{t84\_kroB100} & 100 & 9 & - & - & - & 100.0 & TL & - & TL & 10 & 6 & 40.0 & TL & 4821 & TL \\
\texttt{t84\_kroC100} & 100 & 9 & - & - & - & 100.0 & TL & - & TL & 11 & 6 & 45.5 & TL & 6357 & TL \\
\texttt{t84\_kroD100} & 100 & 9 & - & - & - & 100.0 & TL & - & TL & 10 & 7 & 30.0 & TL & 24214 & TL \\
\texttt{t84\_kroE100} & 100 & 9 & - & - & - & 100.0 & TL & - & TL & 11 & 6 & 45.5 & TL & 17521 & TL \\
\texttt{t84\_rd100} & 100 & 10 & - & - & - & 100.0 & TL & - & TL & 11 & 6 & 45.5 & TL & 4416 & TL \\
\texttt{at62\_kro124p} & 124 & 7 & - & - & - & 100.0 & TL & - & TL & 7 & 4 & 42.9 & TL & 202101 & TL \\
\texttt{at84\_kro124p} & 124 & 9 & - & - & - & 100.0 & TL & - & TL & 10 & 6 & 40.0 & TL & 71479 & TL \\
\end{longtable}

\end{landscape}
\bibliographystyle{plainnat}
\bibliography{bibliography}
\end{document}